\setlist[enumerate,1]{label=(\roman*)}
\theoremstyle{thmstyleone}%
\newtheorem{theorem}{Theorem}
\newtheorem{proposition}[theorem]{Proposition}%
\newtheorem{lemma}[theorem]{Lemma}%
\newtheorem{example}[theorem]{Example}%
\theoremstyle{thmstyletwo}%
\theoremstyle{thmstylethree}%
\newcommand{\PowerSet}[1]{\mathcal{P}(#1)}
\newcommand{\NodeSet}{\MacroColor{N}}
\newcommand{\EdgeSet}{\MacroColor{A}}
\newcommand{\DemandSet}{\MacroColor{Q}}
\newcommand{\NodeCutSet}{\MacroColor{C}}
\newcommand{\NodeCut}{\MacroColor{S}}
\newcommand{\DisaggNodeCutSet}[2]{%
C_{#2}^{\text{disagg}}%
}
\newcommand{\DisaggSubprobVar}[2]{%
z_{#1 #2}%
}
\newcommand{\AggNodeCutSet}[1]{%
C_{#1}^{\text{agg}}%
}
\newcommand{\FlowVolume}{\MacroColor{f}}
\newcommand{\PathSet}{\MacroColor{R}}
\newcommand{\CycleSet}{\MacroColor{R}}
\newcommand{\DemandIdx}{\MacroColor{q}}
\newcommand{\NodeIdx}{\MacroColor{j}}
\newcommand{\PathIdx}{\MacroColor{r}}
\newcommand{\CycleIdx}{\MacroColor{r}}
\newcommand{\TravelRange}{\MacroColor{d}}
\newcommand{\AggV}[1][]{%
\ifthenelse{\isempty{#1}}{v^{\text{agg}}}{v^{\text{agg}}_{#1}}%
}
\newcommand{\DisaggV}[1][]{%
\ifthenelse{\isempty{#1}}{v^{\text{disagg}}}{v^{\text{disagg}}_{#1}}%
}
\newcommand{\TightV}[1][]{%
\ifthenelse{\isempty{#1}}{v^{\text{tight}}}{v^{\text{tight}}_{#1}}%
}
\newcommand{\TighterV}[1][]{%
\ifthenelse{\isempty{#1}}{v^{\text{tighter}}}{v^{\text{tighter}}_{#1}}%
}
\newcommand{\MasterVar}{\MacroColor{x}}
\newcommand{\AggSubprobVar}{\MacroColor{y}}
\newcommand{\MasterFeasSet}{\MacroColor{X}}
\newcommand{\LprMasterFeasSet}{\MacroColor{\bar{X}}}
\newcommand{\DeviationTolerance}{\MacroColor{\alpha}}
\newcommand{\LengthLimit}{\MacroColor{\tau}}
\newcommand{\LabelCharged}{\MacroColor{\delta_{\text{charge}}}}
\newcommand{\LabelIsOutbound}{\MacroColor{\delta_{\text{dest}}}}
\newcommand{\LabelDistFromStart}{\MacroColor{l_{\text{start}}}}
\newcommand{\LabelDistFromCharger}{\MacroColor{l_{\text{charge}}}}
\newcommand{\LabelChargeAtStart}{\MacroColor{\gamma_{\text{end}}}}
\newcommand{\Nan}{\MacroColor{\infty}}
\newcommand{\ResourceExtensionFunction}{\MacroColor{f}}
\newcommand{\MyAnd}{\text{\MacroColor{and}}}
\newcommand{\MyOr}{\text{\MacroColor{or}}}
\newcommand{\EdgeLength}{\MacroColor{l}}
\newcommand{\SinkNode}{\MacroColor{s}}
\newcommand{\DistanceFunction}{\MacroColor{\text{dist}}}
\newcommand{\xmark}{\ding{55}}
\newcolumntype{H}{>{\setbox0=\hbox\bgroup}c<{\egroup}@{}}
\newcommand{\MacroColor}[1]{#1}
\begin{document}

\title[The Strength of Flow Refueling Location Problem Formulations and an Extension to Cyclic Routing]{The Strength of Flow Refueling Location Problem Formulations and an Extension to Cyclic Routing}


\author*[1]{\fnm{Nagisa} \sur{Sugishita}}\email{nagisa.sugishita@umontreal.ca}

\author[1]{\fnm{Margarida} \sur{Carvalho}}\email{carvalho@iro.umontreal.ca}

\author[2]{\fnm{Ribal} \sur{Atallah}}\email{atallah.ribal@hydroquebec.com}

\affil*[1]{\orgdiv{CIRRELT and Departement d’informatique et de recherche operationnel}, \orgname{Université de Montréal}, \orgaddress{\street{2920 Chemin de la Tour}, \city{Montréal}, \postcode{H3T 1N8}, \state{QC}, \country{Canada}}}

\affil[2]{\orgname{Institut de Recherche d’Hydro-Quebec}, \orgaddress{\street{1800 Boulevard Lionel-Boulet}, \city{Varennes}, \postcode{J3X 1S1}, \state{QC}, \country{Canada}}}

\abstract{%
The Flow Refueling Location Problem (FRLP) is a stylized model for determining the optimal placement of refueling stations for vehicles with limited travel ranges, such as hydrogen fuel cell vehicles and electric vehicles. A notable extension, the deviation FRLP, accounts for the possibility that drivers may deviate from their preferred routes to refuel or recharge. While solution techniques based on various mathematical programming formulations have been thoroughly explored for this extension, there is a lack of theoretical insights into the relationships and strengths of these formulations. 

In this work, for the deviation extension, we study two prominent FRLP formulations from the literature and compare their strengths in terms of linear programming (LP) relaxations. We show that the LP relaxation of one formulation yields a bound that is at least as tight as that of the other, which may explain its observed superior performance. Building on these insights, we address a common modeling assumption in the FRLP that requires drivers to use the same paths for their outbound and inbound trips. Specifically, we relax this assumption and introduce the cyclic FRLP, where drivers may use different paths in each direction. We show how existing formulations can be naturally extended to accommodate this setting and describe a branch-and-cut algorithm to solve the problem. We provide numerical experiments highlighting the benefits of such asymmetric routing.
For example, in an instance based on the Californian network, the cyclic FRLP serves all demands using 30\% fewer facilities than the original FRLP.
}

\keywords{Benders Decomposition, Branch-and-Cut, Integer Programming, Flow Refueling Location Problems, Electric Vehicles}



\maketitle

\section{Introduction}\label{sec_introduction}

The Flow Refueling Location Problem (FRLP), introduced by \citet{KubyAndLim2005}, extends flow-capturing location models by \citet{Hodgson1990} to address the optimal placement of refueling facilities for vehicles with limited range.
In the model by \citet{KubyAndLim2005}, each demand is associated with an origin, a destination and a flow volume, and is considered served if there are enough refueling facilities to ensure that the drivers can complete their journey with limited-range vehicles. The goal of the FRLP is to maximize the demands that are served. 

On the practical level, the growing popularity of hydrogen-powered vehicles and electric vehicles (EVs), as well as governmental incentives in some countries to support their adoption, underscores the relevance of the FRLP (see \citet{ArslanAndKarasan2016} and \citet{KadriEtAl2020}). Indeed, given the current insufficiency of refueling and charging infrastructure, it is critical to deploy these facilities effectively. In what follows, we use the EVs application to unify our terminology around the FRLP, referring to charging stations for refueling facilities and EVs for limited-range vehicles.

On the methodological side, the literature on the FRLP has mostly focused on integer programming-based approaches.
\citet{KubyAndLim2005} introduced a mixed-integer programming (MIP) formulation for the FRLP.
However, its size grows rapidly with the problem scale. To address this, \citet{CaparAndKuby2012} proposed a more scalable formulation, which was further improved by \citet{CaparEtAl2013}. Building on this, \citet{ArslanAndKarasan2016} applied Benders decomposition to accelerate the solution process. \citet{KadriEtAl2020} extended the FRLP to a multi-stage stochastic setting, using a decomposition approach closely related to that of \citet{ArslanAndKarasan2016}, with an identical structure for the separation subproblem.

The previously discussed works focus on the classical FRLP, where drivers are assumed to strictly follow predetermined (e.g., shortest) paths without deviation. However, in practice, drivers may deviate slightly from their usual routes to access charging stations. To capture this behavior, \citet{KimAndKuby2012} introduced the deviation FRLP, which allows for multiple feasible paths per origin-destination (OD) pair. Their formulation requires enumerating all paths and, for each path, all combinations of station placements that ensure traversability. This leads to rapid growth in model size as the instance scale increases.
\citet{YildizEtAl2016} proposed an improved formulation of the deviation FRLP that eliminates the need to enumerate combinations of station placements.
Leveraging techniques from \citet{CaparEtAl2013}, they express path traversability using set-covering constraints.
Additionally, they introduced a path-segment-based formulation, solved via a branch-and-price algorithm. 
While this formulation, together with a branch-and-price approach, yields better performance, the set-covering formulation offers greater flexibility and modeling capabilities for incorporating additional constraints, as it directly uses binary variables indicating station presence at each node.
Independently, \citet{ArslanEtAl2019} and \citet{GopfertAndBock2019} proposed another MIP formulation, using set-covering constraints to express the condition that each demand is served, rather than expressing the condition that each path is traversable, as in \citet{YildizEtAl2016}.
This approach eliminates the need for path enumeration and, when solved with a branch-and-cut algorithm, significantly outperforms previous methods, successfully handling instances of practical size.

\begin{table}[t]
\centering
\caption{Summary of FRLP approaches}
\label{tab:frlp_models}
\begin{tabular}{lHHcc}
\toprule
 & & & Supports deviation & Allows cyclic \\
\multicolumn{1}{c}{Work} & & & from preferred route? & routing? \\
\midrule
\citet{KubyAndLim2005} & Path-based set-covering & MIP (non-compact \& heavy preprocessing) & \xmark & \xmark \\
\citet{CaparAndKuby2012}   & Range-based set-covering & MIP & \xmark & \xmark \\
\citet{CaparEtAl2013}      & Arc-based set-covering   & MIP & \xmark & \xmark \\
\citet{ArslanAndKarasan2016} & Arc-based set-covering & Benders decomposition & \xmark & \xmark \\
\citet{KimAndKuby2012}     & Path-based set-covering  & MIP (path enumeration) & \checkmark & \xmark \\
\citet{YildizEtAl2016}     & Arc-based set-covering   & MIP \& Branch-and-price & \checkmark & \xmark \\
\citet{ArslanEtAl2019} \& \citet{GopfertAndBock2019}  & Node-based covering & MIP \& Branch-and-cut & \checkmark & \xmark \\
\textbf{This paper (cyclic FRLP)}        & \textbf{Node-based covering} & \textbf{Branch-and-cut} & \checkmark & \checkmark \\
\hline
\end{tabular}
\end{table}

On the theoretical front, the comparative strengths of the formulations for the deviation FRLP remain underexplored.
We aim to address that gap.
To be more specific, we compare the MIP formulation developed independently by \citet{ArslanEtAl2019} and \citet{GopfertAndBock2019} with the formulation by \citet{YildizEtAl2016} based on the set-covering constraints.
By analyzing these formulations, we aim to understand the reasons behind the superior computational performance of the state-of-the-art formulation.
We note that although \citet{YildizEtAl2016} proposed two formulations, the set-covering formulation and path-segment formulation, we concentrate on the set-covering one, excluding the path-segment formulation due to its limited flexibility for integrating additional constraints.

In addition, we propose an extension of the deviation FRLP, which we call the cyclic FRLP, to support more flexible routing.
All prior work mentioned above assumes route symmetry, i.e., drivers must use the same path in both directions. 
While simplifying the problem, this assumption may lead to suboptimal outcomes in more realistic settings. 
Furthermore, this assumption requires the underlying transportation network to be undirected.
In the cyclic FRLP, drivers are allowed to choose different outbound and inbound routes, enabling more efficient utilization of transportation infrastructure. Notably, the formulation of \citet{ArslanEtAl2019} and \citet{GopfertAndBock2019} can be naturally extended to accommodate this generalization.
We develop a solution method for the cyclic FRLP by introducing a labeling algorithm that searches for a cycle (round-trip route) that can be repeatedly traversed by vehicles with limited range.
A summary of the literature and our contribution to the FRLP—aside from the theoretical analysis of formulation strength—is provided in Table~\ref{tab:frlp_models}.

Using the Californian network developed by \citet{arslan2014impacts}, we demonstrate the difference between the deviation and cyclic FRLP.
Furthermore, to complement our experiments and highlight the value of the cyclic FRLP, we consider a variant that minimizes the number of charging stations while covering a specified portion or all of the demand. This variant has been studied in the literature under the symmetric routing assumption (e.g., \citet{CaparEtAl2013} and \citet{KinayGzaraAndAlumur20211}) and can be readily addressed by adapting formulations such as those by \citet{ArslanEtAl2019} and \citet{GopfertAndBock2019}. Including this variant allows us to directly compare the added benefit of incorporating cyclic routes against the classical FRLP with symmetric assumptions.
For example, on one of the instances based on the Californian network by~\citet{arslan2014impacts}, the cyclic FRLP is able to serve all demands with 30\% fewer charging stations than the deviation FRLP.

The paper is organized as follows.
In Section~\ref{sec_formulations_of_frlp}, we introduce the necessary notation and present two formulations of the FRLP.
Section~\ref{sec_strength_of_formulations} compares the strengths of these formulations via their LP relaxations.
Section~\ref{sec_cyclic_frlp} introduces the cyclic FRLP, which accommodates more flexible routing.
Numerical experiments are also provided to illustrate the differences between the original and cyclic FRLP.
Finally, Section~\ref{sec_conclusions} concludes the paper.

\section{Formulations of the Deviation FRLP}\label{sec_formulations_of_frlp}

In this section, we present the arguments used to define the deviation FRLP and recap two key mathematical formulations of it from the literature.
The notation, namely the parameters and variables used in the formulations in this section, is summarized in Table~\ref{tab_symbols}.

\begin{table}[htbp]
    \centering
    \caption{Symbols and their description}
    \label{tab_symbols}
    \begin{tabular}{cl}
    \toprule
    Symbol & Description \\
    \midrule
        $\NodeSet$ & Set of nodes \\
        $\EdgeSet$ & Set of edges \\
        $G$ & Graph $(\NodeSet, \EdgeSet)$ \\
        $\DemandSet$ & Set of demands \\
        $\PathSet_{\DemandIdx}$ & Set of paths used by drivers of demand $\DemandIdx \in \DemandSet$ \\
        $\PathSet$ & Union of $\PathSet_{\DemandIdx}$ for $\DemandIdx \in \DemandSet$ \\
        $\MasterFeasSet$ & Set of feasible placement of charging stations \\
        $\LprMasterFeasSet$ & LP relaxation of $\MasterFeasSet$ \\
        $\AggNodeCutSet{\DemandIdx}$ & Family of node sets to express the condition of $\DemandIdx \in \DemandSet$ to be served \\
        $\DisaggNodeCutSet{}{\PathIdx}$ & Family of node sets to express the condition of $\PathIdx \in \PathSet$ to be traversable \\
        $\FlowVolume_{\DemandIdx}$ & Demand volume of $\DemandIdx \in \DemandSet$ \\
        $\MasterVar_{\NodeIdx}$ & Binary variable that is 1 if and only if there is a charging station at node $\NodeIdx$ \\
        $\AggSubprobVar_{\DemandIdx}$ & Binary variable that is 1 if and only if $\DemandIdx \in \DemandSet$ is served \\
        $\DisaggSubprobVar{}{\PathIdx}$ & Binary variable that is 1 if and only if $\PathIdx \in \PathSet_{\DemandIdx}$ is used \\
        \bottomrule
    \end{tabular}
\end{table}

Let $G = (\NodeSet, \EdgeSet)$ be an undirected graph, where $\NodeSet$ is the set of nodes and $\EdgeSet$ is the set of edges.
We consider the installation of charging stations in nodes of $\NodeSet$.
For each $\NodeIdx \in \NodeSet$, we use the binary variable $\MasterVar_{\NodeIdx}$ to indicate the installation of a charging station at node $\NodeIdx$.
We define the set of feasible charging station placements as
$
\MasterFeasSet = \LprMasterFeasSet \cap \{0, 1\}^{|\NodeSet|},
$
where $\LprMasterFeasSet$ is a polytope.
For example, $\MasterFeasSet$ can model charging station locations that satisfy a budget constraint.

We denote a set of demands by $\DemandSet$.
Each demand $\DemandIdx \in \DemandSet$ is associated with an origin node, a destination node, a flow volume $\FlowVolume_{\DemandIdx}$, and a set $\PathSet_{\DemandIdx}$ of paths from the origin to the destination.
The set $\PathSet_{\DemandIdx}$ is the set of paths that drivers of $\DemandIdx$ are willing to use.
Demand $\DemandIdx \in \DemandSet$ is considered served if there is at least one path in $\PathSet_\DemandIdx$ that is traversable without running out of battery.
Since the travel range of a vehicle is limited, it may require charging the EV multiple times to complete the travel.
We assume that the graph does not contain edges longer than the travel range of a vehicle and $\PathSet_\DemandIdx \not= \emptyset$ for each $\DemandIdx \in \DemandSet$.
Note that we can remove excessively long edges or demands $\DemandIdx$ with empty $\PathSet_\DemandIdx$ without modifying the set of feasible solutions, which is defined later. 

In the classical FRLP proposed by \citet{KubyAndLim2005}, each demand is associated with a single shortest path. Thus, for each $\DemandIdx \in \DemandSet$, the set $\PathSet_{\DemandIdx}$ is a singleton containing only the shortest path between the origin and destination. \citet{KimAndKuby2012} introduced the deviation FRLP, an extension of the classical FRLP in which multiple paths are considered for each origin-destination (OD) pair. 
For example, in their work, \citet{KimAndKuby2012} defined $\PathSet_{\DemandIdx}$ to include all paths whose lengths are below a specified deviation threshold, for each $\DemandIdx \in \DemandSet$.
Figure~\ref{fig_original_frlp_illustration} illustrates the problem definition.

\begin{figure}[htbp]
\centering
\begin{tikzpicture}[
xscale=1,
point/.style={circle,inner sep=0,minimum size=1mm,fill=black},
nodepadding/.style={circle,inner sep=0,minimum size=6mm},
unselected/.style={draw=black, densely dashed},
selected/.style={draw=black},
hypograph/.style={ultra thick,draw=Rhodamine},
]
\coordinate (v0) at (0.0, 0.0);
\coordinate (v1) at (5.0, 2.0);

\node[nodepadding] (n0) at (v0) {};
\node[nodepadding] (n1) at (v1) {};

\node[point,label={below:1}] at (v0) {};
\node[point,label={below right:2}] at (v1) {};

\draw[unselected] (n0) to [out=70,in=150] node [below] {} (n1);

\draw[unselected] (n0) to node [below] {} (n1);
\draw[unselected] (n0) to [bend right=45] node [below] {} (n1);

\begin{scope}[shift={(-0.16,0.36)}]
\coordinate (v0) at (0.0, 0.0);
\coordinate (v1) at (5.0, 2.0);

\node[nodepadding] (n0) at (v0) {};
\node[nodepadding] (n1) at (v1) {};

\draw[selected, -{Latex[length=2mm]}] (n0) to [out=70,in=150] (n1);
\end{scope}

\begin{scope}[shift={(-0.08,0.18)}]
\coordinate (v0) at (0.0, 0.0);
\coordinate (v1) at (5.0, 2.0);

\node[nodepadding] (n0) at (v0) {};
\node[nodepadding] (n1) at (v1) {};

\draw[selected, {Latex[length=2mm]}-] (n0) to [out=70,in=150] (n1);
\end{scope}

\end{tikzpicture}











\caption{OD pair with origin and destination at nodes $1$ and $2$, respectively; dashed lines indicate possible OD paths, and solid directed lines show a feasible route. In the classical and deviation FRLP, a demand is served if there is a path that is traversable repeatedly without a vehicle running out of battery. Drivers are required to use the same path for both outbound and inbound trips.}
\label{fig_original_frlp_illustration}
\end{figure}

As discussed by~\citet{CaparEtAl2013}, given a path and the travel range of a vehicle, one can construct a family $\NodeCutSet \subset \PowerSet{\NodeSet}$ of node sets such that the path is traversable repeatedly with the vehicle if and only if $\sum_{\NodeIdx \in \NodeCut} \MasterVar_{\NodeIdx} \ge 1$ for all $\NodeCut \in \NodeCutSet$, where $\PowerSet{\NodeSet}$ is the power set of $\NodeSet$.
In words, the path is traversable if and only if each $\NodeCut \in \NodeCutSet$ contains a node with a charging station.
Thus, for each $\DemandIdx \in \DemandSet$ and $\PathIdx \in \PathSet_{\DemandIdx}$, we can find a family $\DisaggNodeCutSet{\DemandIdx}{\PathIdx} \subset \PowerSet{\NodeSet}$ of node sets such that $\PathIdx$ is traversable if and only if $\sum_{\NodeIdx \in \NodeCut} \MasterVar_{\NodeIdx} \ge 1$ for all $\NodeCut \in \DisaggNodeCutSet{\DemandIdx}{\PathIdx}$.
Demand $\DemandIdx \in \DemandSet$ is served if and only if there exists $\PathIdx \in \PathSet_{\DemandIdx}$ such that $\sum_{\NodeIdx \in \NodeCut} \MasterVar_{\NodeIdx} \ge 1$ for all $\NodeCut \in \DisaggNodeCutSet{\DemandIdx}{\PathIdx}$.
Thus, we can obtain the following formulation of the deviation FRLP:
\begin{alignat}{2}
\max_{\MasterVar \in \MasterFeasSet, \DisaggSubprobVar{}{} \in \{0, 1\}^{|\PathSet|}} \ & \sum_{\DemandIdx \in \DemandSet} \sum_{\PathIdx \in \PathSet_{\DemandIdx}} \FlowVolume_{\DemandIdx} \DisaggSubprobVar{}{\PathIdx} \label{eq_disagg_extended_formulation} \\
\text{s.t.} \ & \sum_{\PathIdx \in \PathSet_{\DemandIdx}} \DisaggSubprobVar{}{\PathIdx} \le 1, && \qquad \forall \DemandIdx \in \DemandSet, \notag \\
& \sum_{\NodeIdx \in \NodeCut} \MasterVar_{\NodeIdx} \ge \DisaggSubprobVar{}{\PathIdx}, && \qquad \forall \DemandIdx \in \DemandSet, \PathIdx \in \PathSet_{\DemandIdx}, \NodeCut \in \DisaggNodeCutSet{\DemandIdx}{\PathIdx}, \notag
\end{alignat}
where $\PathSet = \cup_{\DemandIdx \in \DemandSet} \PathSet_{\DemandIdx}$ and, for each $\DemandIdx \in \DemandSet$, the auxiliary variable $\DisaggSubprobVar{}{\PathIdx}$ is 1 if path $\PathIdx$ is used and 0 otherwise.
Formulation~\eqref{eq_disagg_extended_formulation} was proposed by \citet{YildizEtAl2016} as formulation~(14) in their paper.
As discussed in the introduction, \citet{YildizEtAl2016} also proposed a path-segment formulation that can be solved with a branch-and-price approach.
We do not consider the path-segment formulation since it is less flexible than those considered in this section.

\begin{example}
\label{example_disagg_node_cut_set}
Consider the graph $G$ shown in Figure~\ref{fig_frlp_agg_vs_disagg}.
Suppose the travel range is $\TravelRange$, and there is a single OD pair \(\DemandIdx_{1}\), with origin and destination at nodes 1 and 4, respectively.
Assume $\PathSet_{\DemandIdx_{1}} = \{ (1, 2, 4), (1, 2, 3, 4) \}$.
Let us denote $\PathIdx_{1} = (1, 2, 4)$ and $\PathIdx_{2} = (1, 2, 3, 4)$.

\begin{figure}[htbp]
\centering
\begin{tikzpicture}[
point/.style={circle,inner sep=0,minimum size=1mm,fill=black},
hypograph/.style={ultra thick,draw=Rhodamine},
]
\coordinate (v0) at (0.0, 0 .0);
\coordinate (v1) at (3.0, 0.0);
\coordinate (v2) at (4.5, 1.5);
\coordinate (v3) at (6.0, 0.0);
\coordinate (v4) at (9.0, 0.0);

\node[point,label={below:1}] at (v0) {};
\node[point,label={below:2}] at (v1) {};
\node[point,label={above:3}] at (v2) {};
\node[point,label={below:4}] at (v3) {};
\node[point,label={below:5}] at (v4) {};

\draw 
    (v0) -- node[above]{\small $\TravelRange/2$} (v1)
    (v1) -- node[above left]{\small $\TravelRange/2$} (v2)
    (v1) -- node[above]{\small $\TravelRange/2$} (v3)
    (v2) -- node[above right]{\small $\TravelRange/2$} (v3)
    (v3) -- node[above]{\small $\TravelRange/2$} (v4)
    ;


\end{tikzpicture}
\caption{Graph $G$, where $\TravelRange$ is the travel range}
\label{fig_frlp_agg_vs_disagg}
\end{figure}

It can be shown that path \(\PathIdx_{1}\) is traversable if and only if there is at least one charging station at each of the following sets of nodes:
\begin{itemize}
\item nodes 1 or 2,
\item nodes 2 or 4,
\item nodes 4 or 5.
\end{itemize}
See \citet{CaparEtAl2013} for the derivation.
Thus, $\DisaggNodeCutSet{\DemandIdx_{1}}{\PathIdx_{1}} = \{ \{1, 2\}, \{2, 4\}, \{4, 5\} \}$.
Similarly, we have $\DisaggNodeCutSet{\DemandIdx_{1}}{\PathIdx_{2}} = \{ \{1, 2\}, \{2, 3\}, \{3, 4\}, \{4, 5\} \}$.
Therefore, formulation~\eqref{eq_disagg_extended_formulation} becomes
\begin{alignat*}{2}
f_1' = 
\max_{\MasterVar \in \MasterFeasSet, \DisaggSubprobVar{}{} \in \{0, 1\}^2} \ & 
\FlowVolume_{\DemandIdx_{1}} \DisaggSubprobVar{}{\PathIdx_{1}}
+
\FlowVolume_{\DemandIdx_{1}} \DisaggSubprobVar{}{\PathIdx_{2}}
\\
\text{s.t.} \ & 
\DisaggSubprobVar{}{\PathIdx_{1}} + \DisaggSubprobVar{}{\PathIdx_{2}} \le 1, \\
& 
\MasterVar_{1} + \MasterVar_{2} \ge \DisaggSubprobVar{}{\PathIdx_{1}},
\MasterVar_{2} + \MasterVar_{4} \ge \DisaggSubprobVar{}{\PathIdx_{1}},
\MasterVar_{4} + \MasterVar_{5} \ge \DisaggSubprobVar{}{\PathIdx_{1}},
\\
&
\MasterVar_{1} + \MasterVar_{2} \ge \DisaggSubprobVar{}{\PathIdx_{2}},
\MasterVar_{2} + \MasterVar_{3} \ge \DisaggSubprobVar{}{\PathIdx_{2}},
\MasterVar_{3} + \MasterVar_{4} \ge \DisaggSubprobVar{}{\PathIdx_{2}},
\MasterVar_{4} + \MasterVar_{5} \ge \DisaggSubprobVar{}{\PathIdx_{2}}.
\end{alignat*}
\end{example}

Next, we derive an alternative formulation.
While this formulation has been previously introduced, we provide a new and straightforward derivation here to keep the exposition self-contained.
To this end, let us look at another way to formulate the condition of demand $\DemandIdx \in \DemandSet$ being served.
For $\DemandIdx \in \DemandSet$, let $\AggNodeCutSet{\DemandIdx} = \{ \cup_{\PathIdx \in \PathSet_{\DemandIdx}} \NodeCut_{\PathIdx} : \NodeCut_{\PathIdx} \in \DisaggNodeCutSet{\DemandIdx}{\PathIdx}, \PathIdx \in \PathSet_{\DemandIdx} \}$, i.e., an element of $\AggNodeCutSet{\DemandIdx}$ is the union of 
one $S_r$ for each $\PathIdx \in \PathSet$. 
The set $\AggNodeCutSet{\DemandIdx}$ has the following property.

\begin{lemma}
\label{lemma_correctness_of_aggregation}
Demand $\DemandIdx \in \DemandSet$ is served if and only if $\sum_{\NodeIdx \in \NodeCut} \MasterVar_{\NodeIdx} \ge 1$ for all $\NodeCut \in \AggNodeCutSet{\DemandIdx}$.
\end{lemma}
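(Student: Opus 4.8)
The plan is to establish both directions directly from the definitions, using only elementary manipulations of the node sets involved; no optimization machinery is required. Recall that, by definition, demand $\DemandIdx$ is served if and only if there is a \emph{single} path $\PathIdx \in \PathSet_{\DemandIdx}$ that is traversable, i.e., such that $\sum_{\NodeIdx \in \NodeCut} \MasterVar_{\NodeIdx} \ge 1$ for every $\NodeCut \in \DisaggNodeCutSet{\DemandIdx}{\PathIdx}$; and recall that every element of $\AggNodeCutSet{\DemandIdx}$ is obtained by choosing one set $\NodeCut_{\PathIdx} \in \DisaggNodeCutSet{\DemandIdx}{\PathIdx}$ for each $\PathIdx \in \PathSet_{\DemandIdx}$ and forming the union $\bigcup_{\PathIdx \in \PathSet_{\DemandIdx}} \NodeCut_{\PathIdx}$. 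Throughout, $\MasterVar$ is the $0/1$ placement vector, so ``$\NodeCut$ contains a charging station'' is synonymous with ``$\sum_{\NodeIdx \in \NodeCut} \MasterVar_{\NodeIdx} \ge 1$''.

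For the forward implication, I would suppose $\DemandIdx$ is served and fix a traversable path $\PathIdx^{\ast} \in \PathSet_{\DemandIdx}$ witnessing this. Take an arbitrary $\NodeCut \in \AggNodeCutSet{\DemandIdx}$, written as $\NodeCut = \bigcup_{\PathIdx \in \PathSet_{\DemandIdx}} \NodeCut_{\PathIdx}$ with $\NodeCut_{\PathIdx} \in \DisaggNodeCutSet{\DemandIdx}{\PathIdx}$. Since $\NodeCut_{\PathIdx^{\ast}} \in \DisaggNodeCutSet{\DemandIdx}{\PathIdx^{\ast}}$ and $\PathIdx^{\ast}$ is traversable, $\NodeCut_{\PathIdx^{\ast}}$ contains a charging station; as $\NodeCut_{\PathIdx^{\ast}} \subseteq \NodeCut$, so does $\NodeCut$, giving $\sum_{\NodeIdx \in \NodeCut} \MasterVar_{\NodeIdx} \ge 1$. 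Because $\NodeCut \in \AggNodeCutSet{\DemandIdx}$ was arbitrary, the covering condition holds for all members of $\AggNodeCutSet{\DemandIdx}$.

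For the converse I would argue by contraposition. Assume $\DemandIdx$ is not served, so no path in $\PathSet_{\DemandIdx}$ is traversable; hence for every $\PathIdx \in \PathSet_{\DemandIdx}$ there is a set $\NodeCut_{\PathIdx} \in \DisaggNodeCutSet{\DemandIdx}{\PathIdx}$ containing no charging station. Let $\NodeCut = \bigcup_{\PathIdx \in \PathSet_{\DemandIdx}} \NodeCut_{\PathIdx}$, which lies in $\AggNodeCutSet{\DemandIdx}$ by construction. Every node of $\NodeCut$ belongs to some $\NodeCut_{\PathIdx}$ and therefore has no charging station, so $\sum_{\NodeIdx \in \NodeCut} \MasterVar_{\NodeIdx} = 0 < 1$; the covering condition fails for this particular $\NodeCut$, which is exactly the negation we needed.

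I do not expect a genuine obstacle. The only points requiring care are the quantifier bookkeeping — ``served'' carries a hidden existential over $\PathSet_{\DemandIdx}$ whereas the right-hand side is a universal over $\AggNodeCutSet{\DemandIdx}$, and the union-over-paths construction is precisely the device that trades one for the other — and the use of integrality of $\MasterVar$ in the converse, where we need ``$\sum_{\NodeIdx \in \NodeCut_{\PathIdx}} \MasterVar_{\NodeIdx} < 1$'' to coincide with ``$\NodeCut_{\PathIdx}$ is station-free''. This last equivalence is exactly what would break for fractional $\MasterVar$, which already foreshadows that the two formulations can diverge only at the level of their LP relaxations.
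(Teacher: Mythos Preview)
Your proposal is correct and mirrors the paper's proof almost exactly: both directions are argued the same way, with the forward implication using a single traversable path $\PathIdx^{\ast}$ to cover every union in $\AggNodeCutSet{\DemandIdx}$ via its $\PathIdx^{\ast}$-component, and the converse (equivalently, the contrapositive) building a single ``bad'' union from one uncovered set per path. Your closing remark about integrality of $\MasterVar$ is a nice extra observation, though the paper does not make it explicit at this point.
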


\begin{proof}
Suppose that $\DemandIdx \in \DemandSet$ is served.
Then, there exists $\PathIdx' \in \PathSet_{\DemandIdx}$ such that
\begin{equation*}
\sum_{\NodeIdx \in \NodeCut'} \MasterVar_{\NodeIdx} \ge 1, \qquad
\forall \NodeCut' \in \DisaggNodeCutSet{\DemandIdx}{\PathIdx'}.
\end{equation*}
Let $\NodeCut \in \AggNodeCutSet{\DemandIdx}$.
By definition, we can write $\NodeCut$ as
\begin{equation*}
\NodeCut = \bigcup_{\PathIdx \in \PathSet_{\DemandIdx}} \NodeCut_{\PathIdx},
\end{equation*}
where $\NodeCut_{\PathIdx} \in \DisaggNodeCutSet{\DemandIdx}{\PathIdx}$ for each $\PathIdx \in \PathSet_{\DemandIdx}$.
It follows that
\begin{equation*}
\sum_{\NodeIdx \in \NodeCut} \MasterVar_{\NodeIdx}
\ge
\sum_{\NodeIdx \in \NodeCut_{\PathIdx'}} \MasterVar_{\NodeIdx}
\ge 1.
\end{equation*}

Next, suppose that $\DemandIdx \in \DemandSet$ is not served.
For any $\PathIdx \in \PathSet_{\DemandIdx}$, there exists $\NodeCut_{\PathIdx}' \in \DisaggNodeCutSet{\DemandIdx}{\DemandIdx}$ such that
\begin{equation*}
\sum_{\NodeIdx \in \NodeCut_{\PathIdx}'} \MasterVar_{\NodeIdx} = 0.
\end{equation*}
Let 
\begin{equation*}
\NodeCut' = \bigcup_{\PathIdx \in \PathSet_{\DemandIdx}} \NodeCut_{\PathIdx}'.
\end{equation*}
Then, $\NodeCut' \in \AggNodeCutSet{\DemandIdx}$ and
\begin{equation*}
\sum_{\NodeIdx \in \NodeCut'} \MasterVar_{\NodeIdx}
\le
\sum_{\PathIdx \in \PathSet_{\DemandIdx}} \sum_{\NodeIdx \in \NodeCut_{\PathIdx}} \MasterVar_{\NodeIdx}
= 0.
\end{equation*}
\end{proof}

Therefore, we obtain the second formulation of the deviation FRLP for maximizing the amount of served demands as
\begin{subequations}
\label{eq_agg_extended_formulation}
\begin{alignat}{2}
\max_{\MasterVar \in \MasterFeasSet, \AggSubprobVar \in \{0, 1\}^{|\DemandSet|}} \ & \sum_{\DemandIdx \in \DemandSet} \FlowVolume_{\DemandIdx} \AggSubprobVar_{\DemandIdx} \\
\text{s.t.} \ & \sum_{\NodeIdx \in \NodeCut} \MasterVar_{\NodeIdx} \ge \AggSubprobVar_{\DemandIdx}, && \qquad \forall \DemandIdx \in \DemandSet, \NodeCut \in \AggNodeCutSet{\DemandIdx}, 
\label{eq_agg_extended_formulation_constraint}
\end{alignat}
\end{subequations}
where $\AggSubprobVar_{\DemandIdx}$ is $1$ if demand $\DemandIdx \in \DemandSet$ is served, and 0 otherwise.
This is the formulation used by \citet{ArslanEtAl2019} and \citet{GopfertAndBock2019}.
Note that when $\PathSet_{\DemandIdx}$ is a singleton for each $\DemandIdx \in \DemandSet$, formulations~\eqref{eq_disagg_extended_formulation} and~\eqref{eq_agg_extended_formulation} coincide and are equivalent to the one used by \citet{CaparEtAl2013}.

\begin{example}
\label{example_agg_node_cut_set}
Let us consider the graph $G$ from Example~\ref{example_disagg_node_cut_set} again.  
We have
\begin{align}
\AggNodeCutSet{\DemandIdx_{1}} = \Big\{ &
\{1, 2\}, \{1, 2, 3\}, \{1, 2, 3, 4\}, \{1, 2, 4, 5\}, \{1, 2, 4\}, \{2, 3, 4\}, 
\notag
\\
& \quad 
\{2, 4, 5\}, 
\{2, 3, 4, 5\}, \{3, 4, 5\}, \{4, 5\} \Big\}.
\label{eq_example_agg_node_cut_set}
\end{align}
Therefore, formulation~\eqref{eq_agg_extended_formulation} is
\begin{alignat*}{2}
f_2' = 
\max_{\MasterVar \in \MasterFeasSet, \AggSubprobVar \in \{0, 1\}} \ & 
\FlowVolume_{\DemandIdx_{1}} \AggSubprobVar_{\DemandIdx_{1}} 
\\
\text{s.t.} \ 
& \MasterVar_{1} + \MasterVar_{2} \ge \AggSubprobVar_{\DemandIdx_{1}}, 
\MasterVar_{1} + \MasterVar_{2} + \MasterVar_{3} \ge \AggSubprobVar_{\DemandIdx_{1}},
\MasterVar_{1} + \MasterVar_{2} + \MasterVar_{3} + \MasterVar_{4} \ge \AggSubprobVar_{\DemandIdx_{1}}, \\
& \MasterVar_{1} + \MasterVar_{2} + \MasterVar_{4} + \MasterVar_{5} \ge \AggSubprobVar_{\DemandIdx_{1}}, 
\MasterVar_{1} + \MasterVar_{2} + \MasterVar_{4} \ge \AggSubprobVar_{\DemandIdx_{1}},
\MasterVar_{2} + \MasterVar_{3} + \MasterVar_{4} \ge \AggSubprobVar_{\DemandIdx_{1}}, \\
& \MasterVar_{2} + \MasterVar_{4} + \MasterVar_{5} \ge \AggSubprobVar_{\DemandIdx_{1}}, 
\MasterVar_{2} + \MasterVar_{3} + \MasterVar_{4} + \MasterVar_{5} \ge \AggSubprobVar_{\DemandIdx_{1}},
\MasterVar_{3} + \MasterVar_{4} + \MasterVar_{5} \ge \AggSubprobVar_{\DemandIdx_{1}}, \\
& \MasterVar_{4} + \MasterVar_{5} \ge \AggSubprobVar_{\DemandIdx_{1}}.
\end{alignat*}
\end{example}

Formulation~\eqref{eq_agg_extended_formulation} has fewer variables than formulation~\eqref{eq_disagg_extended_formulation}, but many constraints since
\begin{equation*}
|\AggNodeCutSet{\DemandIdx}| = \prod_{\PathIdx \in \PathSet_{\DemandIdx}} |\DisaggNodeCutSet{\DemandIdx}{\PathIdx}|,
\end{equation*}
which grows rapidly as the number of paths in $\PathSet_{\DemandIdx}$ increases.
However, some of the constraints are redundant.
For instance, in Example~\ref{example_agg_node_cut_set}, the covering constraints for the subset 
$
\{\{1, 2\}, \{2, 3, 4\}, \{4, 5\}\}
$
of $\AggNodeCutSet{\DemandIdx_{1}}$ imply those of $\AggNodeCutSet{\DemandIdx_{1}}$. 
Note that this subset contains minimal node sets, i.e., those that are not strict supersets of any other node sets in  $\AggNodeCutSet{\DemandIdx_{1}}$.
We can therefore formalize this reasoning.
To that end, let us define
\begin{align*}
\bar{P}(C) = \left\{ (x, y) \in [0, 1]^{|\NodeSet| + 1} : \sum_{\NodeIdx \in \NodeCut} \MasterVar_{\NodeIdx} \ge y, \ \ \forall \NodeCut \in C \right\}
\end{align*}
where $C \subset \PowerSet{\NodeSet}$ is a family of node sets.

\begin{proposition}
\label{prop_minimal_node_set}
Let $C \subset \PowerSet{\NodeSet}$ be a family of node sets and $C' \subset C$ denote the subfamily of minimal ones:
$
C' = \{ \NodeCut' \in C : \NodeCut = \NodeCut' \ \MyOr \ \NodeCut \not\subset \NodeCut', \ \forall \NodeCut \in C \}.
$
\begin{enumerate}
\item 
It holds $\bar{P}(C) = \bar{P}(C')$.
\item 
Let $\NodeCut \in C'$, then 
there exists $(x, y) \in \{0, 1\}^{|\NodeSet| + 1}$ such that $(x, y) \in \bar{P}(C \backslash \{ \NodeCut \}) \backslash \bar{P}(C)$.
\item 
Let $\NodeCut \in C'$, then
$\sum_{\NodeIdx \in \NodeCut} \MasterVar_{\NodeIdx} \ge y_{\DemandIdx}$ is facet-defining for $\mathrm{conv}(\bar{P}(C) \cap \{0, 1\}^{|\NodeSet| + 1})$.
\end{enumerate}
\end{proposition}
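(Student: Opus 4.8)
The plan is to prove the three items in turn; each reduces to an elementary observation about minimal node sets together with an explicit choice of $0/1$ points, the only real bookkeeping appearing in (iii). I work under the (harmless) standing hypothesis that $\emptyset \notin C$, which holds for the cut families $\DisaggNodeCutSet{\DemandIdx}{\PathIdx}$ and $\AggNodeCutSet{\DemandIdx}$ arising in the FRLP; without it an empty set in $C$ forces $y=0$ on $\bar{P}(C)$ and (iii) genuinely fails. For (i), the inclusion $\bar{P}(C) \subseteq \bar{P}(C')$ is immediate since $C' \subseteq C$ gives fewer inequalities. For the converse I would first record the standard fact that every $\NodeCut \in C$ contains some $\NodeCut' \in C'$: the finite nonempty family $\{\tilde{\NodeCut} \in C : \tilde{\NodeCut} \subseteq \NodeCut\}$ has an inclusion-minimal member $\NodeCut'$, which is then minimal in all of $C$, hence $\NodeCut' \in C'$. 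Given $(\MasterVar, y) \in \bar{P}(C')$ and $\NodeCut \in C$, picking such a $\NodeCut' \subseteq \NodeCut$ yields $\sum_{\NodeIdx \in \NodeCut}\MasterVar_\NodeIdx \ge \sum_{\NodeIdx \in \NodeCut'}\MasterVar_\NodeIdx \ge y$ because $\MasterVar \ge 0$.

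For (ii), fix $\NodeCut \in C'$ and take the $0/1$ point with $y=1$ and $\MasterVar$ equal to the indicator of $\NodeSet \setminus \NodeCut$ (a station at every node outside $\NodeCut$). The constraint indexed by $\NodeCut$ reads $0 \ge 1$, so this point is not in $\bar{P}(C)$. For any $\tilde{\NodeCut} \in C$ with $\tilde{\NodeCut} \ne \NodeCut$, minimality of $\NodeCut$ forbids $\tilde{\NodeCut} \subsetneq \NodeCut$, so $\tilde{\NodeCut}$ contains a node outside $\NodeCut$ and its constraint is satisfied; hence the point lies in $\bar{P}(C \setminus \{\NodeCut\})$.

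For (iii), set $Q = \mathrm{conv}(\bar{P}(C) \cap \{0,1\}^{|\NodeSet|+1})$. I would first check that $Q$ is full-dimensional: every $0/1$ point with $y=0$ lies in $\bar{P}(C)$, so $(\mathbf{1},0)$, the points $(\mathbf{1}-e_\NodeIdx,0)$ for $\NodeIdx \in \NodeSet$, and $(\mathbf{1},1)$ — the last in $\bar{P}(C)$ precisely because every $\NodeCut \in C$ is nonempty — form $|\NodeSet|+2$ affinely independent points of $Q$. The inequality $\sum_{\NodeIdx\in\NodeCut}\MasterVar_\NodeIdx \ge y$ is one of the defining inequalities of $\bar{P}(C)$, hence valid for $Q$, and it is strict at $(\mathbf{1},0)\in Q$, so the face $F$ it induces is proper. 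It remains to exhibit $|\NodeSet|+1$ affinely independent $0/1$ points of $F$: the origin $(\mathbf{0},0)$ and the points $(e_\NodeIdx,0)$ for $\NodeIdx \in \NodeSet \setminus \NodeCut$ (feasible and tight since $y=0$ and $\NodeIdx\notin\NodeCut$), together with, for each $\NodeIdx_0 \in \NodeCut$, the point with $y=1$ and $\MasterVar$ the indicator of $(\NodeSet \setminus \NodeCut) \cup \{\NodeIdx_0\}$ (tight because its $\NodeCut$-sum equals $1$, and feasible because any $\tilde{\NodeCut}\in C$ other than $\NodeCut$ meets $\NodeSet\setminus\NodeCut$ by minimality). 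These are $1+(|\NodeSet|-|\NodeCut|)+|\NodeCut| = |\NodeSet|+1$ points, and affine independence is a short computation: subtracting the origin, the $y$-coordinate forces the coefficients of the $y=1$ points to sum to zero, each coordinate $\NodeIdx_0\in\NodeCut$ then isolates the coefficient of the corresponding $y=1$ point, and the coordinates in $\NodeSet\setminus\NodeCut$ isolate the rest. Hence $\dim F = |\NodeSet| = \dim Q - 1$, so the inequality is facet-defining.

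The main obstacle is purely organizational: keeping the two point families in (iii) straight and confirming that the $y=1$ points satisfy every constraint of $\bar{P}(C)$. Each such verification collapses to the one structural fact that $\NodeCut \in C'$ admits no proper subset lying in $C$, so I expect no deeper difficulty once the bookkeeping is set up carefully.
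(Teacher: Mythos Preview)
Your proposal is correct and follows essentially the same approach as the paper: the same monotonicity argument for (i), the identical indicator-of-$\NodeSet\setminus\NodeCut$ point for (ii), and the very same family of $|\NodeSet|+1$ tight $0/1$ points for (iii). You are slightly more explicit than the paper in verifying full-dimensionality, checking affine independence, and noting the standing hypothesis $\emptyset\notin C$, but these are refinements of presentation rather than a different argument.
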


\begin{proof}
\begin{enumerate}
\item
It is clear that $\bar{P}(C) \subset \bar{P}(C')$ since $C' \subset C$, so we will show $\bar{P}(C) \supset \bar{P}(C')$.
To derive a contradiction, suppose that $\bar{P}(C) \not\supset \bar{P}(C')$ and let $(\MasterVar, \AggSubprobVar) \in \bar{P}(C') \backslash \bar{P}(C)$.
Since $(\MasterVar, \AggSubprobVar) \in \bar{P}(C')$, $0 \le x \le 1$ and $0 \le y \le 1$.
Furthermore, since $(\MasterVar, \AggSubprobVar) \not\in \bar{P}(C)$, there exists $\NodeCut \in C$ such that $\sum_{\NodeIdx \in \NodeCut} \MasterVar_{\NodeIdx} < \AggSubprobVar$.
However, we can find $\NodeCut' \in C'$ such that $\NodeCut' \subset \NodeCut$, for which we have $\sum_{\NodeIdx \in \NodeCut'} \MasterVar_{\NodeIdx} \le \sum_{\NodeIdx \in \NodeCut} \MasterVar_{\NodeIdx} < \AggSubprobVar$.
This contradicts the assumption that $(\MasterVar, \AggSubprobVar) \in \bar{P}(C')$.
Thus, it follows that $\bar{P}(C) \supset \bar{P}(C')$.
\item 
Let $S$ be any node set in $C'$.
Define
\begin{equation*}
\MasterVar_{\NodeIdx} = \begin{cases}
0 & \text{if $\NodeIdx \in S$,} \\
1 & \text{otherwise.} \\
\end{cases}
\end{equation*}
Then, for any $S' \in C \backslash \{S\}$, we have $S' \backslash S \not= \emptyset$ (otherwise $S$ is not minimal and $S \not\in C'$) and thus
\begin{equation*}
\sum_{\NodeIdx \in S'} \MasterVar_{\NodeIdx} \ge 1.
\end{equation*}
Therefore, $(\MasterVar, 1) \in \bar{P}(C \backslash \{S\})$.
However, since
\begin{equation*}
\sum_{\NodeIdx \in S} \MasterVar_{\NodeIdx} = 0,
\end{equation*}
we have $(\MasterVar, 1) \not\in \bar{P}(C)$.
\item 
Observe that $\mathrm{conv}(\bar{P}(C) \cap \{0, 1\}^{|\NodeSet| + 1})$ is full dimensional and that the inequality is valid for $\mathrm{conv}(\bar{P}(C) \cap \{0, 1\}^{|\NodeSet|})$.
The following set contains $|\NodeSet| + 1$ affinely independent points that satisfy the inequality with equality:
\begin{equation*}
\Bigg\{ (0, 0) \Bigg\} \cup
\Bigg\{ (e_{\NodeIdx}, 0) : \NodeIdx \in \NodeSet \backslash S \Bigg\} \cup
\Bigg\{ \Bigg(e_{\NodeIdx} + \sum_{\NodeIdx' \in \NodeSet \backslash S} e_{\NodeIdx'}, 1 \Bigg) : \NodeIdx \in S \Bigg\},
\end{equation*}
where $e_{\NodeIdx} \in \mathbb{R}^{|\NodeSet|}$ is the $\NodeIdx$-th unit vector.
Therefore, the given inequality is facet-defining.
\end{enumerate}
\end{proof}

In particular, Proposition~\ref{prop_minimal_node_set} implies that, for $\DemandIdx \in \DemandSet$, we can drop the node sets from $\AggNodeCutSet{\DemandIdx}$ that are not minimal, reducing the number of constraints~\eqref{eq_agg_extended_formulation_constraint}.
However, dropping a minimal node set from $\AggNodeCutSet{\DemandIdx}$ may introduce extra integer points.
We also note that Assertion~(iii) in Proposition~\ref{prop_minimal_node_set} concerns the polytope $\bar{P}(\AggNodeCutSet{\DemandIdx})$, which is associated with a single OD pair $\DemandIdx \in \DemandSet$. 
Proposition~6 by \citet{ArslanEtAl2019} gives the condition under which constraint~\eqref{eq_agg_extended_formulation_constraint} is facet-defining for the convex hull of the feasible region of formulation~\eqref{eq_agg_extended_formulation}.

For $\DemandIdx \in \DemandSet$, when $\PathSet_{\DemandIdx}$ consists of a single, simple path, the polytope $\bar{P}(\AggNodeCutSet{\DemandIdx})$ has the following property.

\begin{proposition}
\label{prop_integrality_of_set_covering_polytope}
Let $\DemandIdx \in \DemandSet$.
Suppose $\PathSet_{\DemandIdx}$ consists of a single, simple path.
Then, $\bar{P}(\AggNodeCutSet{\DemandIdx})$ is integral (i.e.\ all the vertexes of $\bar{P}(\AggNodeCutSet{\DemandIdx})$ are integral).
\end{proposition}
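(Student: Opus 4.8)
The plan is to prove integrality by showing that every vertex of $\bar{P}(\AggNodeCutSet{\DemandIdx})$ has integral coordinates. Write $C := \AggNodeCutSet{\DemandIdx}$; since $\PathSet_{\DemandIdx} = \{r\}$ is a singleton, $C$ equals $\DisaggNodeCutSet{\DemandIdx}{r}$, the family of node sets certifying that the simple path $r$ is traversable. The first thing I would establish is that \emph{every vertex $(\MasterVar^{\ast}, y^{\ast})$ of $\bar{P}(C)$ has $y^{\ast} \in \{0,1\}$} — and this part uses nothing special about $C$. Suppose $0 < y^{\ast} < 1$. Then the bounds $y \ge 0$ and $y \le 1$ are slack, so the constraints active at the vertex are exactly: $\MasterVar_{\NodeIdx} = 0$ for $\NodeIdx$ in some set $Z$, $\MasterVar_{\NodeIdx} = 1$ for $\NodeIdx$ in some set $O$, and $\sum_{\NodeIdx \in \NodeCut}\MasterVar^{\ast}_{\NodeIdx} = y^{\ast}$ for $\NodeCut$ ranging over some subfamily $C^{\ast} \subseteq C$. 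Put $F := \NodeSet \setminus (Z \cup O)$. Every $\NodeCut \in C^{\ast}$ satisfies $\NodeCut \cap O = \emptyset$, since otherwise $\sum_{\NodeIdx\in\NodeCut}\MasterVar^{\ast}_{\NodeIdx} \ge 1 > y^{\ast}$; hence $\NodeCut \subseteq Z \cup F$ and $\sum_{\NodeIdx\in\NodeCut\cap F}\MasterVar^{\ast}_{\NodeIdx} = y^{\ast}$. Because $(\MasterVar^{\ast}, y^{\ast})$ is a vertex, the active constraints span $\mathbb{R}^{|\NodeSet|+1}$; the active bounds already span the coordinate subspace indexed by $Z \cup O$, so, after reducing the active covering constraints modulo that subspace, the vectors $(\mathbf{1}_{\NodeCut\cap F}, -1)$ for $\NodeCut \in C^{\ast}$ (where $\mathbf{1}_{\NodeCut\cap F} \in \mathbb{R}^{|F|}$ and the trailing $-1$ is the $y$-coordinate) must span all of $\mathbb{R}^{|F|+1}$; in particular $C^{\ast} \neq \emptyset$. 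But the displayed equalities say precisely that each such vector is orthogonal to $\big((\MasterVar^{\ast}_{\NodeIdx})_{\NodeIdx\in F},\, y^{\ast}\big)$, which therefore must be the zero vector, contradicting $y^{\ast} > 0$. (The corner case $F = \emptyset$ is harmless: then the span is $\mathbb{R}$ and the same argument applies.)

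With this in hand, I would reduce to a set-covering polytope. By the previous step, every vertex of $\bar{P}(C)$ lies in $\{y = 0\}$ or in $\{y = 1\}$, and — being a vertex of a polytope that lies in one of its faces — it is a vertex of the corresponding face. The face $\bar{P}(C) \cap \{y = 0\}$ equals $[0,1]^{|\NodeSet|} \times \{0\}$, since there the covering inequalities are implied by $\MasterVar \ge 0$; its vertices are plainly integral. The face $\bar{P}(C) \cap \{y = 1\}$ equals $Q \times \{1\}$, where $Q := \{ \MasterVar \in [0,1]^{|\NodeSet|} : \sum_{\NodeIdx\in\NodeCut}\MasterVar_{\NodeIdx} \ge 1 \text{ for all } \NodeCut\in C\}$ is the set-covering polytope of $C$. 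So the whole proposition reduces to showing that $Q$ is integral.

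For the final step I would invoke the structure of $C$ for a simple path: by the construction of \citet{CaparEtAl2013}, when $r$ is a single simple path the family $C = \DisaggNodeCutSet{\DemandIdx}{r}$ consists of \emph{intervals} of the nodes linearly ordered along $r$ (extended, when applicable, by an off-path neighbour of each endpoint of $r$), as one sees in Examples~\ref{example_disagg_node_cut_set} and~\ref{example_agg_node_cut_set}. Consequently the incidence matrix of $C$ (rows indexed by node sets, columns by nodes) has the consecutive-ones property, hence is totally unimodular, and therefore $Q$ is integral. Together with the two earlier steps, this shows that all vertices of $\bar{P}(\AggNodeCutSet{\DemandIdx})$ are integral.

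The main obstacle is the first step: one must argue carefully that vertexness forces the submatrix indexed by $C^{\ast}$ and the coordinates $F \cup \{y\}$ to have full column rank, and that this is incompatible with a fractional $y^{\ast}$; the bookkeeping over which constraints are active, and the degenerate possibilities, need care. The reduction and the interval-matrix argument are, by contrast, routine, the only external ingredients being the interval description of $\DisaggNodeCutSet{\DemandIdx}{r}$ from \citet{CaparEtAl2013} and the classical total unimodularity of consecutive-ones matrices. A tempting alternative — proving directly that the entire constraint matrix of $\bar{P}(C)$ (the interval incidence matrix, the column of $-1$'s multiplying $y$, and the box constraints) is totally unimodular — is in fact also correct, but verifying that appending that extra column preserves total unimodularity is less immediate than the vertex argument sketched above.
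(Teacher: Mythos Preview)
Your proposal is correct and follows essentially the same three-step structure as the paper: first show that every vertex of $\bar{P}(C)$ has $y^\ast\in\{0,1\}$ (the paper's Lemma~\ref{lemma_PC_vertexes_has_0_1_y}), then reduce the $y=1$ face to the set-covering polytope $U(\AggNodeCutSet{\DemandIdx})$, and finally invoke the consecutive-ones/total-unimodularity argument of \citet{CaparEtAl2013} and \citet{FulkersonAndGross1965} (the paper's Lemma~\ref{lemma_total_unimodularity_of_set_covering_polytope}). The only notable difference is in the first step: where the paper argues by a case split on whether some active covering set meets the set of coordinates fixed at~$1$ and, in the remaining case, exhibits an explicit second solution to the active system, you instead use a clean spanning/orthogonality argument---the active covering normals, projected onto the free coordinates and $y$, would have to span yet are all orthogonal to the nonzero vector $\big((\MasterVar^{\ast}_{\NodeIdx})_{\NodeIdx\in F},\,y^{\ast}\big)$; this is a tidy alternative that avoids the case analysis.
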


To show Proposition~\ref{prop_integrality_of_set_covering_polytope}, we first present two lemmas.

\begin{lemma}
\label{lemma_PC_vertexes_has_0_1_y}
Let $C \subset \PowerSet{\NodeSet}$ be a family of node sets and $(\MasterVar^*, \AggSubprobVar^*)$ be a vertex of $\bar{P}(C)$.
Then, we have $\AggSubprobVar^* \in \{0, 1\}$.
\end{lemma}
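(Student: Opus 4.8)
The plan is to argue by contradiction, using the standard fact that a point of a polytope is a vertex exactly when the constraints active there have full rank equal to the ambient dimension. Suppose $(\MasterVar^*, \AggSubprobVar^*)$ is a vertex of $\bar{P}(C)$ with $0 < \AggSubprobVar^* < 1$. Partition $\NodeSet$ into $N_0 = \{ \NodeIdx : \MasterVar^*_{\NodeIdx} = 0 \}$, $N_1 = \{ \NodeIdx : \MasterVar^*_{\NodeIdx} = 1 \}$, $N_f = \{ \NodeIdx : 0 < \MasterVar^*_{\NodeIdx} < 1 \}$, and let $C^* = \{ S \in C : \sum_{\NodeIdx \in S} \MasterVar^*_{\NodeIdx} = \AggSubprobVar^* \}$ be the tight covering sets. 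Since $0 < \AggSubprobVar^* < 1$, neither bound on $\AggSubprobVar$ is active, and a bound on $\MasterVar_{\NodeIdx}$ is active only for $\NodeIdx \in N_0 \cup N_1$, so the constraints active at $(\MasterVar^*, \AggSubprobVar^*)$ are precisely $\MasterVar_{\NodeIdx} = 0$ ($\NodeIdx \in N_0$), $\MasterVar_{\NodeIdx} = 1$ ($\NodeIdx \in N_1$), and $\sum_{\NodeIdx \in S} \MasterVar_{\NodeIdx} = \AggSubprobVar$ ($S \in C^*$). The gradients $e_{\NodeIdx}$, $\NodeIdx \in N_0 \cup N_1$, span precisely the coordinate subspace indexed by $N_0 \cup N_1$ (and vanish in the remaining coordinates), so full rank $|\NodeSet| + 1$ forces the covering-constraint gradients, restricted to the $N_f$- and $\AggSubprobVar$-coordinates, i.e.\ the vectors $(\mathbf{1}_{S \cap N_f}, -1) \in \mathbb{R}^{N_f} \times \mathbb{R}$ for $S \in C^*$ (with $\mathbf{1}_{T} \in \mathbb{R}^{N_f}$ the indicator vector of $T \subseteq N_f$), to span $\mathbb{R}^{N_f} \times \mathbb{R}$; in particular $C^* \neq \emptyset$.

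Next I would split into two cases. If $N_f = \emptyset$, then $\MasterVar^*$ is integral, so $\sum_{\NodeIdx \in S} \MasterVar^*_{\NodeIdx}$ is a nonnegative integer for any $S \in C^*$, contradicting $\sum_{\NodeIdx \in S} \MasterVar^*_{\NodeIdx} = \AggSubprobVar^* \in (0,1)$. If $N_f \neq \emptyset$, I would first note that every tight set avoids $N_1$: writing $\AggSubprobVar^* = \sum_{\NodeIdx \in S} \MasterVar^*_{\NodeIdx} = |S \cap N_1| + \sum_{\NodeIdx \in S \cap N_f} \MasterVar^*_{\NodeIdx}$ with $|S \cap N_1|$ a nonnegative integer and the last sum nonnegative, the bound $\AggSubprobVar^* < 1$ forces $|S \cap N_1| = 0$, hence $\langle \mathbf{1}_{S \cap N_f}, \tilde{\MasterVar} \rangle = \AggSubprobVar^*$ for all $S \in C^*$, where $\tilde{\MasterVar} = (\MasterVar^*_{\NodeIdx})_{\NodeIdx \in N_f} \in (0,1)^{N_f}$. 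Fixing any $\hat{S} \in C^*$, the differences $(\mathbf{1}_{S \cap N_f} - \mathbf{1}_{\hat{S} \cap N_f}, 0)$, $S \in C^*$, span $\mathbb{R}^{N_f} \times \{0\}$, because the gradients themselves span $\mathbb{R}^{N_f} \times \mathbb{R}$ while all sharing last coordinate $-1$. But $\tilde{\MasterVar}$ is orthogonal to each such difference (the pairing equals $\AggSubprobVar^* - \AggSubprobVar^* = 0$), so $\tilde{\MasterVar} = 0$, contradicting $\tilde{\MasterVar} \in (0,1)^{N_f}$. This exhausts the cases, so $\AggSubprobVar^* \in \{0,1\}$.

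The vertex/active-constraint characterization and the rank bookkeeping after projection are routine linear algebra. The one step deserving a short justification is that the differences $(\mathbf{1}_{S \cap N_f} - \mathbf{1}_{\hat{S} \cap N_f}, 0)$ span all of $\mathbb{R}^{N_f} \times \{0\}$: the span of $\{ (\mathbf{1}_{S \cap N_f}, -1) : S \in C^* \}$ equals the span of these differences together with the single line through $(\mathbf{1}_{\hat{S} \cap N_f}, -1)$; the differences lie in the $|N_f|$-dimensional subspace $\mathbb{R}^{N_f} \times \{0\}$, while the extra vector has nonzero last coordinate, so for the total to attain dimension $|N_f| + 1$ the differences must already exhaust $\mathbb{R}^{N_f} \times \{0\}$. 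I expect this small observation to be the only genuine obstacle; note that the argument uses nothing about where $C$ comes from, consistent with the lemma being stated for an arbitrary family $C \subset \PowerSet{\NodeSet}$ — the structural properties of $\AggNodeCutSet{\DemandIdx}$ will enter only in the companion lemma used to conclude Proposition~\ref{prop_integrality_of_set_covering_polytope}.
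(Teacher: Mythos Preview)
Your proof is correct and reaches the same conclusion as the paper's, but via a genuinely different route in the key case. Both arguments immediately dispose of the situation where a bound on $y$ is active, and both exploit (in one direction or the other) the observation that a tight covering set meeting $N_1$ forces $y^* \ge 1$. The divergence is in the remaining case (no $y$-bound active, every tight $S$ disjoint from $N_1$): you assume $0 < y^* < 1$ and run a rank/orthogonality argument---the projections $(\mathbf{1}_{S\cap N_f},-1)$ must span $\mathbb{R}^{N_f}\times\mathbb{R}$, so the differences span $\mathbb{R}^{N_f}\times\{0\}$, yet all are orthogonal to the strictly positive vector $\tilde{x}$, a contradiction. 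The paper instead selects a basis $I_1\subset C^*$, $I_2\subset N_0$, $I_3\subset N_1$ of $|N|+1$ active constraints and simply exhibits the point with $x_j=1$ iff $j\in I_3$ and $y=0$, checks that it satisfies this linear system, and concludes $y^*=0$ by uniqueness of the vertex-defining solution. The paper's construction is shorter and sidesteps the spanning-of-differences bookkeeping; your argument is more intrinsically geometric and avoids having to pick a particular basis of active constraints.
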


\begin{proof}
Let us rewrite the definition of polyhedron $\bar{P}(C)$ as
\begin{equation*}
\bar{P}(C) = \left\{
(\MasterVar, \AggSubprobVar) \in \mathbb{R}^{|\NodeSet| + 1} : 
\begin{array}{lll}
\sum_{\NodeIdx \in \NodeCut} \MasterVar_{\NodeIdx} \ge \AggSubprobVar, & \NodeCut \in C, \\
\MasterVar_{\NodeIdx} \ge 0, & \NodeIdx \in \NodeSet, \\
\MasterVar_{\NodeIdx} \le 1, & \NodeIdx \in \NodeSet, \\
a_{\NodeIdx} \AggSubprobVar \ge b_{\NodeIdx}, & \NodeIdx \in \{1, 2\}
\end{array}
\right\},
\end{equation*}
where $a = (1, -1)$ and $b = (0, -1)$.
There are $|\NodeSet| + 1$ inequalities that are active at $(\MasterVar^*, \AggSubprobVar^*)$ and whose coefficients are linearly independent.
That is, we can find $I_{1} \subset C$, $I_{2}$, $I_{3} \subset \NodeSet$ and $I_{4} \subset \{1, 2\}$ such that $|I_{1}| + |I_{2}| + |I_{3}| + |I_{4}| = |\NodeSet| + 1$ and $(\MasterVar^*, \AggSubprobVar^*)$ is a unique solution to
\begin{equation}
\left\{
\begin{array}{ll}
\sum_{\NodeIdx \in \NodeCut} \MasterVar_{\NodeIdx} = \AggSubprobVar, & \NodeCut \in I_{1}, \\
\MasterVar_{\NodeIdx} = 0, & \NodeIdx \in I_{2}, \\
\MasterVar_{\NodeIdx} = 1, & \NodeIdx \in I_{3}, \\
a_{\NodeIdx} \AggSubprobVar = b_{\NodeIdx}, & \NodeIdx \in I_{4}.
\end{array}
\right.
\label{lemma_PC_vertexes_has_0_1_y_vertex_defining_system}
\end{equation}
If $1 \in I_{4}$, $y \ge 0$ is active at $(x^*, y^*)$ and $y^* = 0$.
Similarly, if $2 \in I_{4}$, $y \le 1$ is active  at $(x^*, y^*)$ and $y^* = 1$.
In the following, suppose that $I_{4} = \emptyset$.

If $I_{3} \cap S = \emptyset$ for any $S \in I_{1}$, then
\begin{equation*}
\MasterVar_{\NodeIdx} = \begin{cases}
1 & \text{ if } i \in I_{3}, \\
0 & \text{ otherwise, }
\end{cases}
\qquad
\AggSubprobVar = 0
\end{equation*}
is a solution to~\eqref{lemma_PC_vertexes_has_0_1_y_vertex_defining_system}.
Since $(\MasterVar^*, \AggSubprobVar^*)$ is the unique solution to~\eqref{lemma_PC_vertexes_has_0_1_y_vertex_defining_system}, we have $\AggSubprobVar^* = 0$.

If there exists $S \in I_{1}$ such that $I_{3} \cap S \not= \emptyset$ , then
\begin{equation*}
\AggSubprobVar^* = \sum_{\NodeIdx \in S} \MasterVar_{\NodeIdx}^* \ge \sum_{\NodeIdx \in S \cap I_{3}} \MasterVar_{\NodeIdx}^* = |S \cap I_{3}| \ge 1,
\end{equation*}
where we used the assumption that $\sum_{\NodeIdx \in S} \MasterVar_{\NodeIdx} \ge \AggSubprobVar$ and $\MasterVar_{\NodeIdx} \le 1$ for $\NodeIdx \in I_{3}$ are active at $(\MasterVar^*, \AggSubprobVar^*)$.
Since $(\MasterVar^*, \AggSubprobVar^*) \in \bar{P}(C)$, we have $\AggSubprobVar^* \le 1$.
Therefore, $\AggSubprobVar^* = 1$.
\end{proof}

\begin{lemma}
\label{lemma_total_unimodularity_of_set_covering_polytope}
Let $\DemandIdx \in \DemandSet$ and define
\begin{equation*}
U(\AggNodeCutSet{\DemandIdx}) = \left\{ x \in [0, 1]^{|N|} : \sum_{j \in S} x_j \ge 1, \ \ \forall S \in \AggNodeCutSet{\DemandIdx} \right\}.
\end{equation*}
If $\PathSet_{\DemandIdx}$ contains a single, simple path, $U(\AggNodeCutSet{\DemandIdx})$ is integral.
\end{lemma}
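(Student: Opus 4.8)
\emph{Proof plan.} The plan is to exhibit a single linear order on $\NodeSet$ under which every set in $\AggNodeCutSet{\DemandIdx}$ is an interval; this makes the constraint matrix of $U(\AggNodeCutSet{\DemandIdx})$ a consecutive‑ones (interval) matrix, hence totally unimodular, and integrality follows from the Hoffman–Kruskal theorem.

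First I would record where the single‑path hypothesis enters. Since $\PathSet_{\DemandIdx} = \{\PathIdx\}$ for a simple path $\PathIdx$, the union in the definition of $\AggNodeCutSet{\DemandIdx}$ is over the single index $\PathIdx$, so $\AggNodeCutSet{\DemandIdx} = \DisaggNodeCutSet{\DemandIdx}{\PathIdx}$. I would then invoke the explicit description of $\DisaggNodeCutSet{\DemandIdx}{\PathIdx}$ arising from the construction of \citet{CaparEtAl2013}: list the nodes occurring in the sets of $\DisaggNodeCutSet{\DemandIdx}{\PathIdx}$ in the order in which they are visited along $\PathIdx$, with the finitely many off‑path nodes adjacent to the origin inserted immediately before the origin and those adjacent to the destination inserted immediately after the destination, and then extend this order arbitrarily to all of $\NodeSet$. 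The claim to be checked is that, in this order, every $\NodeCut \in \DisaggNodeCutSet{\DemandIdx}{\PathIdx}$ is a set of consecutive nodes (as in Example~\ref{example_disagg_node_cut_set}, where $\{1,2\}, \{2,4\}, \{4,5\}$ are intervals of the order $1,2,4,5$).

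Granting this, the remainder is routine. Let $A \in \{0,1\}^{|\AggNodeCutSet{\DemandIdx}| \times |\NodeSet|}$ be the node‑set incidence matrix; columns indexing nodes that occur in no set of $\AggNodeCutSet{\DemandIdx}$ are zero, which is harmless. After reordering columns by the order above, $A$ is an interval matrix, hence totally unimodular; stacking $I$ and $-I$ beneath $A$ to encode the bounds $0 \le \MasterVar \le 1$ preserves total unimodularity, and the right‑hand side vector is integral, so $U(\AggNodeCutSet{\DemandIdx})$ is an integral polytope. The main obstacle is the middle step: verifying the interval structure of $\DisaggNodeCutSet{\DemandIdx}{\PathIdx}$ rigorously rather than by example. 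This requires unwinding the construction of \citet{CaparEtAl2013} to show that each node set it produces is the set of path nodes lying strictly between two consecutive charging‑or‑endpoint positions that are more than one travel range $\TravelRange$ apart, so that these "windows" are intervals along one common order, and that the only off‑path nodes involved are endpoint neighbours, which (using simplicity of $\PathIdx$) can be consistently placed at the two ends without forcing any node into two different positions. I would isolate this as a short structural lemma or cite the corresponding statement of \citet{CaparEtAl2013} directly.
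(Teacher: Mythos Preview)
Your proposal is correct and follows essentially the same route as the paper: order the nodes along the single simple path so that every set in $\AggNodeCutSet{\DemandIdx}=\DisaggNodeCutSet{\DemandIdx}{\PathIdx}$ becomes an interval, observe that the resulting $0/1$ constraint matrix has the consecutive-ones property and is therefore totally unimodular, and conclude integrality of $U(\AggNodeCutSet{\DemandIdx})$. The paper does exactly this, relabeling so the path is $(1,\ldots,t)$, citing \citet{CaparEtAl2013} for the fact that each $S$ is a set of consecutive integers and \citet{FulkersonAndGross1965} for total unimodularity; your extra caution about off-path endpoint neighbours is not needed in the paper's setting (the Capar--Kuby sets consist only of path nodes), but it does no harm.
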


\begin{proof}
By relabeling the nodes $\NodeSet$ if necessary, we can assume that $\NodeSet = \{1, 2, \ldots, |\NodeSet|\}$ and the path in $\PathSet_{\DemandIdx}$ is $(1, 2, \ldots, t)$.
The argument of~\citet{CaparEtAl2013} ensures that any $\NodeCut \in \AggNodeCutSet{\DemandIdx}$ is a set of consecutive integers.
That is, if we write
\begin{equation*}
U(\AggNodeCutSet{\DemandIdx}) = \left\{ x \in [0, 1]^{|N|} : B x \ge 1 \right\},
\end{equation*}
where $B \in \{0, 1\}^{|\AggNodeCutSet{\DemandIdx}| \times |\NodeSet|}$, $1$s in each row of $B$ occur
in consecutive positions.
It is known that $B$ is totally unimodular (see \citet{FulkersonAndGross1965}).
It is straightforward to see that all vertexes of $U(\AggNodeCutSet{\DemandIdx})$ are integral.
\end{proof}

\begin{proof}[Proof of Proposition~\ref{prop_integrality_of_set_covering_polytope}]
Let us denote $\{0, 1\}^{|\NodeSet|} = \{\beta_i : i = 1, 2, \ldots, 2^{|\NodeSet|} \}$ and the set of vertexes of a poyhedron $P$ by $\mathrm{V}(P)$.
Then, Lemma~\ref{lemma_PC_vertexes_has_0_1_y} implies that
\begin{equation*}
\mathrm{V}(\bar{P}(\AggNodeCutSet{\DemandIdx})) = \{ (\beta_i, 0) : i = 1, 2, \ldots, 2^{|N|} \} \cup \{ (x, 1) : x \in \mathrm{V}(U(\AggNodeCutSet{\DemandIdx})) \}.
\end{equation*}
Now invoke Lemma~\ref{lemma_total_unimodularity_of_set_covering_polytope} to show that $\mathrm{V}(U(\AggNodeCutSet{\DemandIdx}))$ only contains integral points.
\end{proof}

\section{Strength of Formulations of the Deviation FRLP}\label{sec_strength_of_formulations}

In Section~\ref{sec_formulations_of_frlp}, two formulations of the deviation FRLP were presented.
In this section, we compare their strengths in terms of the tightness of the bounds given by their LP relaxations.

To facilitate the discussion, we rewrite formulations~\eqref{eq_disagg_extended_formulation} and~\eqref{eq_agg_extended_formulation}.
Following the arguments of \citet{KubyAndLim2005}, it can be shown that the optimal objective value of~\eqref{eq_disagg_extended_formulation} remains unchanged if the integrality constraints on $\DisaggSubprobVar{}{\PathIdx}$ are relaxed.
Thus, we can rewrite formulation~\eqref{eq_disagg_extended_formulation} as
\begin{equation}
\label{eq_disagg_value_function_formulation}
\max_{\MasterVar \in \MasterFeasSet} \sum_{\DemandIdx \in \DemandSet} \DisaggV[\DemandIdx](\MasterVar),
\end{equation}
where for each $\DemandIdx \in \DemandSet$ and $\MasterVar \in [0, 1]^{|\NodeSet|}$, we have
\footnote{The vector $z_{\DemandIdx} \in [0, 1]^{|R_q|}$ in $\DisaggV[\DemandIdx]$ is not the same as $z \in [0, 1]^{|R|}$ in~\eqref{eq_disagg_extended_formulation}, but both reflect the traversability of the corresponding paths.}
\begin{equation}
\DisaggV[\DemandIdx](x) = \max_{\DisaggSubprobVar{\DemandIdx}{} \in [0, 1]^{|\PathSet_{\DemandIdx}|}} \left\{ \sum_{\PathIdx \in \PathSet_{\DemandIdx}} \FlowVolume_{\DemandIdx} \DisaggSubprobVar{\DemandIdx}{\PathIdx} : \sum_{\PathIdx \in \PathSet_{\DemandIdx}} \DisaggSubprobVar{\DemandIdx}{\PathIdx} \le 1, \sum_{\NodeIdx \in \NodeCut} \MasterVar_{\NodeIdx} \ge \DisaggSubprobVar{\DemandIdx}{\PathIdx}, \ \ \forall \PathIdx \in \PathSet_{\DemandIdx},  \NodeCut \in \DisaggNodeCutSet{\DemandIdx}{\PathIdx} \right\}.
\label{eq_disagg_formulation}
\end{equation}
We have $\MasterVar \in \MasterFeasSet$ is optimal for~\eqref{eq_disagg_extended_formulation} if and only if it is optimal for~\eqref{eq_disagg_value_function_formulation}.
Note that being the value function of an LP, $\DisaggV[\DemandIdx]$ is concave for each $\DemandIdx \in \DemandSet$.

Similarly, it can be shown that the optimal objective value does not change if we relax the integrality constraint of $\AggSubprobVar_{\DemandIdx}$ for $\DemandIdx \in \DemandSet$ in formulation~\eqref{eq_agg_extended_formulation}.
Therefore, we can rewrite formulation~\eqref{eq_agg_extended_formulation} as
\begin{equation}
\label{eq_agg_value_function_formulation}
\max_{\MasterVar \in \MasterFeasSet} \sum_{\DemandIdx \in \DemandSet} \AggV[\DemandIdx](\MasterVar),
\end{equation}
where, for each $\DemandIdx \in \DemandSet$ and $\MasterVar \in [0, 1]^{|\NodeSet|}$, we have
\begin{equation}
\AggV[\DemandIdx](\MasterVar) = \max_{\AggSubprobVar_{\DemandIdx} \in [0, 1]} \left\{ \FlowVolume_{\DemandIdx} \AggSubprobVar_{\DemandIdx} : \sum_{\NodeIdx \in \NodeCut} \MasterVar_{\NodeIdx} \ge \AggSubprobVar_{\DemandIdx}, \ \ \forall \NodeCut \in \AggNodeCutSet{\DemandIdx} \right\}.
\label{eq_agg_formulation}
\end{equation}
Again, $\MasterVar \in \MasterFeasSet$ is optimal for~\eqref{eq_agg_extended_formulation} if and only if it is optimal for~\eqref{eq_agg_value_function_formulation}.
Note that $\AggV[\DemandIdx]$ is a concave function for each $\DemandIdx \in \DemandSet$.

For any $\MasterVar \in \{0, 1\}^{|\NodeSet|}$ and $\DemandIdx \in \DemandSet$, we have
\begin{equation*}
\DisaggV[\DemandIdx](\MasterVar) = \AggV[\DemandIdx](\MasterVar) = \begin{cases}
\FlowVolume_{\DemandIdx} & \text{if demand $\DemandIdx$ is served}, \\
0 & \text{otherwise},
\end{cases}
\end{equation*}
and thus
\begin{equation*}
\max_{\MasterVar \in \MasterFeasSet} \sum_{\DemandIdx \in \DemandSet} \DisaggV[\DemandIdx](x)
=
\max_{\MasterVar \in \MasterFeasSet} \sum_{\DemandIdx \in \DemandSet} \AggV[\DemandIdx](x).
\end{equation*}
However, functions $\DisaggV[\DemandIdx]$ and $\AggV[\DemandIdx]$ may not agree on $\MasterVar \in \LprMasterFeasSet$ (i.e.\ if $\MasterVar$ is fractional), and in particular, the LP relaxations of~\eqref{eq_disagg_extended_formulation} and~\eqref{eq_agg_extended_formulation} may give different upper bounds.

In general, for each $\DemandIdx \in \DemandSet$, if we have a concave function $v_{\DemandIdx}$ on $[0, 1]^{|\NodeSet|}$ that agrees with $\AggV[\DemandIdx]$ on $\{0, 1\}^{|\NodeSet|}$, we can replace $\AggV[\DemandIdx]$ with $v_{\DemandIdx}$ without affecting the optimal objective value nor the structure of the problem:
\begin{equation}
\max_{\MasterVar \in \MasterFeasSet} \sum_{\DemandIdx \in \DemandSet} \AggV[\DemandIdx](x)
=
\max_{\MasterVar \in \MasterFeasSet} \sum_{\DemandIdx \in \DemandSet} v_{\DemandIdx}(x).
\label{eq_invariance_of_solution_under_K}
\end{equation}
Let us define
\begin{equation}
K_{\DemandIdx} = \left\{ v : \text{$v$ is a concave function on $[0, 1]^{|\NodeSet|}$} \ \MyAnd \ \AggV[\DemandIdx](\MasterVar') = v(\MasterVar'), \ \  \forall \MasterVar' \in \{0, 1\}^{|\NodeSet|} \right\}.
\label{eq_def_K_q}
\end{equation}
We have $K_q \not= \emptyset$ since $\AggV[\DemandIdx] \in K_q$.
Now, let
\begin{equation*}
\TightV[\DemandIdx](\MasterVar) = \inf_{v \in K_q} \{ v(\MasterVar) \}.
\end{equation*}
Being the point-wise infimum of concave functions, $\TightV[\DemandIdx]$ is concave and thus $\TightV[\DemandIdx] \in K_q$.
The following lemma gives an alternative definition of $\TightV[\DemandIdx]$.
\begin{lemma}
\label{lemma_tightv_formula}
Let us denote $\{0, 1\}^{|\NodeSet|} = \{\beta_i : i = 1, 2, \ldots, 2^{|\NodeSet|} \}$, that is, $\beta_i \in \{0, 1\}^{|\NodeSet|}$ for each $i$ and $\beta_i \not= \beta_j$ for any $i \not= j$.
Then, we have
\begin{equation*}
\TightV[\DemandIdx](\MasterVar) = \max_{\alpha} \left\{ \sum_{i = 1}^{2^{|\NodeSet|}} \alpha_i \AggV_{\DemandIdx}(\beta_i) 
:
\sum_{i = 1}^{2^{|\NodeSet|}} \alpha_i \beta_i = \MasterVar,
\sum_{i = 1}^{2^{|\NodeSet|}} \alpha_i = 1, \alpha \ge 0
\right\}.
\end{equation*}
\end{lemma}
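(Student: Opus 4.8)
The plan is to identify the right-hand side as the concave envelope of $\AggV[\DemandIdx]$ over the vertices of the hypercube and to establish the two inequalities separately. Write $g(\MasterVar)$ for the right-hand side. First I would check that $g$ is well defined on $[0,1]^{|\NodeSet|}$: since $[0,1]^{|\NodeSet|} = \mathrm{conv}\{\beta_i : i = 1, \dots, 2^{|\NodeSet|}\}$, the feasible set $\Delta(\MasterVar) = \{\alpha \ge 0 : \sum_i \alpha_i \beta_i = \MasterVar,\ \sum_i \alpha_i = 1\}$ is a nonempty compact polytope, so the linear objective attains its maximum there. I would keep this attainment in mind, since it is used again below.

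For the inequality $\TightV[\DemandIdx] \ge g$, fix $v \in K_{\DemandIdx}$ and $\alpha \in \Delta(\MasterVar)$. Concavity of $v$ together with $\MasterVar = \sum_i \alpha_i \beta_i$ gives $v(\MasterVar) \ge \sum_i \alpha_i v(\beta_i) = \sum_i \alpha_i \AggV[\DemandIdx](\beta_i)$, where the equality uses that $v \in K_{\DemandIdx}$ agrees with $\AggV[\DemandIdx]$ on $\{0,1\}^{|\NodeSet|}$. Taking the maximum over $\alpha \in \Delta(\MasterVar)$ yields $v(\MasterVar) \ge g(\MasterVar)$, and taking the infimum over $v \in K_{\DemandIdx}$ gives $\TightV[\DemandIdx](\MasterVar) \ge g(\MasterVar)$.

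For the reverse inequality I would show $g \in K_{\DemandIdx}$; then $\TightV[\DemandIdx] \le g$ follows directly from the definition of $\TightV[\DemandIdx]$ as an infimum over $K_{\DemandIdx}$. For concavity of $g$: given $\MasterVar = \lambda \MasterVar^{1} + (1-\lambda)\MasterVar^{2}$ with $\MasterVar^{1}, \MasterVar^{2} \in [0,1]^{|\NodeSet|}$ and $\lambda \in [0,1]$, pick maximizers $\alpha^{1} \in \Delta(\MasterVar^{1})$ and $\alpha^{2} \in \Delta(\MasterVar^{2})$ (which exist by the attainment noted above); then $\lambda \alpha^{1} + (1-\lambda)\alpha^{2} \in \Delta(\MasterVar)$ and its objective value equals $\lambda g(\MasterVar^{1}) + (1-\lambda) g(\MasterVar^{2})$, so $g(\MasterVar) \ge \lambda g(\MasterVar^{1}) + (1-\lambda)g(\MasterVar^{2})$. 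For agreement with $\AggV[\DemandIdx]$ on $\{0,1\}^{|\NodeSet|}$: choosing $\alpha$ to be the unit vector supported on the index of $\beta_k$ shows $g(\beta_k) \ge \AggV[\DemandIdx](\beta_k)$, and since $\beta_k$ is an extreme point of $[0,1]^{|\NodeSet|}$ while every $\beta_i$ lies in $[0,1]^{|\NodeSet|}$, that unit vector is the only element of $\Delta(\beta_k)$, so in fact $g(\beta_k) = \AggV[\DemandIdx](\beta_k)$. Combining the two inequalities gives the claim.

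The argument is short, and the only point I would be careful about is the attainment of the maximum defining $g(\MasterVar)$: it is what lets us treat $g$ as a genuine function rather than a supremum, and what makes the concavity step go through by combining individual maximizers. It follows from compactness of the simplex $\Delta(\MasterVar)$; the remainder is bookkeeping with convex combinations.
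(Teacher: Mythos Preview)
Your proof is correct and follows essentially the same approach as the paper: both show that the right-hand side defines a concave function agreeing with $\AggV[\DemandIdx]$ on $\{0,1\}^{|\NodeSet|}$ (hence lies in $K_{\DemandIdx}$), and that it is a lower bound for every $v \in K_{\DemandIdx}$. The paper invokes ``value function of an LP is concave'' and argues the lower bound by contradiction, whereas you prove concavity and the lower bound directly, but these are stylistic rather than substantive differences.
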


\begin{proof}
Let us write
\begin{equation*}
v_{\DemandIdx}(\MasterVar) = \max_{\alpha} \left\{ \sum_{i = 1}^{2^{|\NodeSet|}} \alpha_i \AggV_{\DemandIdx}(\beta_i) 
:
\sum_{i = 1}^{2^{|\NodeSet|}} \alpha_i \beta_i = \MasterVar,
\sum_{i = 1}^{2^{|\NodeSet|}} \alpha_i = 1, \alpha \ge 0
\right\},
\end{equation*}
where $v_{\DemandIdx}$ is polyhedral and concave since it is the value function of an LP.
Furthermore, $v_{\DemandIdx}(\MasterVar) = \AggV[\DemandIdx](\MasterVar)$ for any $\MasterVar \in \{0, 1\}^{|\NodeSet|}$.
We show that for any $v_{\DemandIdx}' \in K_{\DemandIdx}$, we have $v_{\DemandIdx} \le v_{\DemandIdx}'$, which implies that $\TightV[\DemandIdx] = v_{\DemandIdx}$.
Fix any $v_{\DemandIdx}' \in K_{\DemandIdx}$.
To derive a contradiction, suppose there exists $\MasterVar' \in [0, 1]^{|\NodeSet|}$ such that $v_{\DemandIdx}(\MasterVar') > v_{\DemandIdx}'(\MasterVar')$.
Let $\alpha'$ be the solution of the LP in the definition in $v_{\DemandIdx}$ given $\MasterVar = \MasterVar'$.
In particular, it follows that
\begin{equation*}
\sum_{i = 1}^{2^{|\NodeSet|}} \alpha_i' \beta_i = \MasterVar'.
\end{equation*}
We have
\begin{align*}
v_{\DemandIdx}'(\MasterVar')
< v_{\DemandIdx}(\MasterVar')
= \sum_{i = 1}^{2^{|\NodeSet|}} \alpha_i' \AggV[\DemandIdx](\beta_i)
= \sum_{i = 1}^{2^{|\NodeSet|}} \alpha_i' v_{\DemandIdx}'(\beta_i),
\end{align*}
contradicting the assumption that $v_{\DemandIdx}'$ is concave.
Therefore, we have $v_{\DemandIdx} \le v_{\DemandIdx}'$.
\end{proof}

In particular, Lemma~\ref{lemma_tightv_formula} implies that $\TightV[\DemandIdx]$ is polyhedral for any $\DemandIdx \in \DemandSet$.

The set $K_{\DemandIdx}$ consists of concave functions that model the amount of served demand corresponding to $\DemandIdx \in \DemandSet$.
The tightest function in $K_q$ is $\TightV[\DemandIdx]$ in the sense that it gives the best continuous (LP) relaxations among $K_{\DemandIdx}$:
\begin{equation*}
\max_{\MasterVar \in \LprMasterFeasSet} \sum_{\DemandIdx \in \DemandSet} \TightV[\DemandIdx](x)
\le
\max_{\MasterVar \in \LprMasterFeasSet} \sum_{\DemandIdx \in \DemandSet} v_{\DemandIdx}(x), \ \  \forall v_{\DemandIdx} \in K_{\DemandIdx}, \DemandIdx \in \DemandSet.
\end{equation*}

Now we are ready to state our first comparison result.

\begin{proposition}
\label{prop_comparison_of_v_on_general_point}
Let $\DemandIdx \in \DemandSet$ and $\MasterVar \in [0, 1]^{|\NodeSet|}$.
\begin{enumerate}
\item 
It holds
\begin{equation*}
\TightV[\DemandIdx](\MasterVar) \le \AggV[\DemandIdx](\MasterVar) \le \DisaggV[\DemandIdx](\MasterVar).
\end{equation*}
\item 
If $\PathSet_{\DemandIdx}$ consists of a single path, we have
\begin{equation*}
\AggV[\DemandIdx](\MasterVar) = \DisaggV[\DemandIdx](\MasterVar).
\end{equation*}
\item 
If $\PathSet_{\DemandIdx}$ consists of a single, simple path, we have
\begin{equation}
\label{eq_tightness_of_two_formulations_under_simple_paths_assumption}
\TightV[\DemandIdx](\MasterVar) = \AggV[\DemandIdx](\MasterVar) = \DisaggV[\DemandIdx](\MasterVar).
\end{equation}
\end{enumerate}
\end{proposition}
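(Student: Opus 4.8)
The plan is to prove the three assertions in the stated order, since each rests on the previous ones.

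\emph{Part (i).} The bound $\TightV[\DemandIdx](\MasterVar)\le\AggV[\DemandIdx](\MasterVar)$ is immediate from the definitions: $\AggV[\DemandIdx]$ is concave and coincides with itself on $\{0,1\}^{|\NodeSet|}$, hence $\AggV[\DemandIdx]\in K_{\DemandIdx}$, while $\TightV[\DemandIdx]=\inf_{v\in K_{\DemandIdx}}v$. For $\AggV[\DemandIdx](\MasterVar)\le\DisaggV[\DemandIdx](\MasterVar)$ I would take an optimal $\AggSubprobVar_{\DemandIdx}^{*}$ of the LP~\eqref{eq_agg_formulation} defining $\AggV[\DemandIdx](\MasterVar)$ and build from it a feasible point of the LP~\eqref{eq_disagg_formulation} defining $\DisaggV[\DemandIdx](\MasterVar)$ of value at least $\FlowVolume_{\DemandIdx}\AggSubprobVar_{\DemandIdx}^{*}$. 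The candidate is $\DisaggSubprobVar{\DemandIdx}{\PathIdx}:=\min\bigl(1,\min_{\NodeCut\in\DisaggNodeCutSet{\DemandIdx}{\PathIdx}}\sum_{\NodeIdx\in\NodeCut}\MasterVar_{\NodeIdx}\bigr)$ for each $\PathIdx\in\PathSet_{\DemandIdx}$, which satisfies every constraint of~\eqref{eq_disagg_formulation} except possibly $\sum_{\PathIdx}\DisaggSubprobVar{\DemandIdx}{\PathIdx}\le1$. If that sum exceeds $1$, I would rescale $\DisaggSubprobVar{\DemandIdx}{}$ by its reciprocal, which preserves feasibility and gives value $\FlowVolume_{\DemandIdx}\ge\FlowVolume_{\DemandIdx}\AggSubprobVar_{\DemandIdx}^{*}$ (using $\AggSubprobVar_{\DemandIdx}^{*}\le1$); otherwise, I would argue $\sum_{\PathIdx}\DisaggSubprobVar{\DemandIdx}{\PathIdx}\ge\AggSubprobVar_{\DemandIdx}^{*}$, which is clear if some $\DisaggSubprobVar{\DemandIdx}{\PathIdx}=1$, and otherwise follows by picking a minimizing $\NodeCut_{\PathIdx}^{*}\in\DisaggNodeCutSet{\DemandIdx}{\PathIdx}$ for each $\PathIdx$, observing $\bigcup_{\PathIdx}\NodeCut_{\PathIdx}^{*}\in\AggNodeCutSet{\DemandIdx}$, and chaining $\AggSubprobVar_{\DemandIdx}^{*}\le\sum_{\NodeIdx\in\bigcup_{\PathIdx}\NodeCut_{\PathIdx}^{*}}\MasterVar_{\NodeIdx}\le\sum_{\PathIdx}\sum_{\NodeIdx\in\NodeCut_{\PathIdx}^{*}}\MasterVar_{\NodeIdx}=\sum_{\PathIdx}\DisaggSubprobVar{\DemandIdx}{\PathIdx}$.

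\emph{Part (ii).} I expect this to be a definitional check. If $\PathSet_{\DemandIdx}=\{\rho\}$, then $\AggNodeCutSet{\DemandIdx}=\DisaggNodeCutSet{\DemandIdx}{\rho}$ and the only coupling constraint $\sum_{\PathIdx\in\PathSet_{\DemandIdx}}\DisaggSubprobVar{\DemandIdx}{\PathIdx}\le1$ in~\eqref{eq_disagg_formulation} becomes $\DisaggSubprobVar{\DemandIdx}{\rho}\le1$; the LPs defining $\DisaggV[\DemandIdx](\MasterVar)$ and $\AggV[\DemandIdx](\MasterVar)$ then coincide verbatim, so the two values are equal.

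\emph{Part (iii).} Parts (i) and (ii) already give $\TightV[\DemandIdx](\MasterVar)\le\AggV[\DemandIdx](\MasterVar)=\DisaggV[\DemandIdx](\MasterVar)$, so it remains to show $\AggV[\DemandIdx](\MasterVar)\le\TightV[\DemandIdx](\MasterVar)$. This is where simplicity of the path enters, via Proposition~\ref{prop_integrality_of_set_covering_polytope}: $\bar{P}(\AggNodeCutSet{\DemandIdx})$ is integral. The key idea is to decompose not $\MasterVar$ but the \emph{lifted} point: choose $\AggSubprobVar_{\DemandIdx}^{*}$ attaining $\AggV[\DemandIdx](\MasterVar)=\FlowVolume_{\DemandIdx}\AggSubprobVar_{\DemandIdx}^{*}$, so that $(\MasterVar,\AggSubprobVar_{\DemandIdx}^{*})\in\bar{P}(\AggNodeCutSet{\DemandIdx})$, and write it as a convex combination $\sum_{k}\lambda_{k}(\hat{\MasterVar}^{k},\hat{\AggSubprobVar}^{k})$ of integral vertices of $\bar{P}(\AggNodeCutSet{\DemandIdx})$. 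Each $(\hat{\MasterVar}^{k},\hat{\AggSubprobVar}^{k})$ is feasible for the LP~\eqref{eq_agg_formulation} at $\hat{\MasterVar}^{k}$, so $\AggV[\DemandIdx](\hat{\MasterVar}^{k})\ge\FlowVolume_{\DemandIdx}\hat{\AggSubprobVar}^{k}$; since $\sum_{k}\lambda_{k}\hat{\MasterVar}^{k}=\MasterVar$ and $\sum_{k}\lambda_{k}=1$, grouping the $\hat{\MasterVar}^{k}$ by value exhibits a feasible point of the maximization in Lemma~\ref{lemma_tightv_formula}, whence $\TightV[\DemandIdx](\MasterVar)\ge\sum_{k}\lambda_{k}\AggV[\DemandIdx](\hat{\MasterVar}^{k})\ge\FlowVolume_{\DemandIdx}\sum_{k}\lambda_{k}\hat{\AggSubprobVar}^{k}=\FlowVolume_{\DemandIdx}\AggSubprobVar_{\DemandIdx}^{*}=\AggV[\DemandIdx](\MasterVar)$.

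The routine parts are (i) and (ii); the crux is (iii). The obstacle there is recognising that a naive decomposition of $\MasterVar$ over the integral set-covering polytope $U(\AggNodeCutSet{\DemandIdx})$ of Lemma~\ref{lemma_total_unimodularity_of_set_covering_polytope} loses track of the served-demand value, so one must lift to $\bar{P}(\AggNodeCutSet{\DemandIdx})$ and decompose $(\MasterVar,\AggSubprobVar_{\DemandIdx}^{*})$ instead. A subtlety worth flagging: a chosen vertex may have $\hat{\AggSubprobVar}^{k}=0$ while $\AggV[\DemandIdx](\hat{\MasterVar}^{k})=\FlowVolume_{\DemandIdx}$, so only the inequality $\AggV[\DemandIdx](\hat{\MasterVar}^{k})\ge\FlowVolume_{\DemandIdx}\hat{\AggSubprobVar}^{k}$ is available — but that is exactly what the decomposition supplies. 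Finally, the argument tacitly uses $\FlowVolume_{\DemandIdx}>0$; if $\FlowVolume_{\DemandIdx}=0$ all three functions vanish identically and the statement is trivial.
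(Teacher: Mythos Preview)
Your proposal is correct and follows essentially the same route as the paper. Two minor presentational differences are worth noting. In part~(i), the paper argues in the opposite direction: it starts from an optimal $z$ of~\eqref{eq_disagg_formulation}, picks a tight $\NodeCut_{\DemandIdx\PathIdx}$ for each $\PathIdx$, and uses $\bigcup_{\PathIdx}\NodeCut_{\DemandIdx\PathIdx}\in\AggNodeCutSet{\DemandIdx}$ to bound $\AggV[\DemandIdx]$ from above --- the same combinatorial step you use, just read backwards. In part~(iii), the paper compares the subgraphs of $\AggV[\DemandIdx]$ and $\TightV[\DemandIdx]$ directly, arguing via Proposition~\ref{prop_integrality_of_set_covering_polytope} that both have vertex set $\{(\MasterVar,\AggV[\DemandIdx](\MasterVar)):\MasterVar\in\{0,1\}^{|\NodeSet|}\}$; your convex decomposition of $(\MasterVar,\AggSubprobVar_{\DemandIdx}^{*})$ over the integral vertices of $\bar{P}(\AggNodeCutSet{\DemandIdx})$ is the same idea unpacked pointwise, and arguably cleaner.
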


\begin{proof}
\begin{enumerate}
\item 
The inequality $\TightV[\DemandIdx](\MasterVar) \le \AggV[\DemandIdx](\MasterVar)$ follows from the definition, thus we will show $\AggV[\DemandIdx](\MasterVar) \le \DisaggV[\DemandIdx](\MasterVar)$.
If $\DisaggV[\DemandIdx](\MasterVar) = \FlowVolume_{\DemandIdx}$, the inequality follows from $\AggV[\DemandIdx](\MasterVar) \le \FlowVolume_{\DemandIdx}$.
Thus, assume $\DisaggV[\DemandIdx](\MasterVar) < \FlowVolume_{\DemandIdx}$.
Let $\DisaggSubprobVar{}{} \in [0, 1]^{|\PathSet|}$ and $\AggSubprobVar_{\DemandIdx} \in [0, 1]$ be the solution of the LPs in~\eqref{eq_disagg_formulation} and~\eqref{eq_agg_formulation}.
Since $\DisaggV[\DemandIdx](\MasterVar) < \FlowVolume_{\DemandIdx}$, we have
\begin{equation*}
\sum_{\PathIdx \in \PathSet_{\DemandIdx}} \DisaggSubprobVar{\DemandIdx}{\PathIdx} < 1.
\end{equation*}
For each $\PathIdx \in \PathSet_{\DemandIdx}$, there exists $\NodeCut_{\DemandIdx \PathIdx} \in \DisaggNodeCutSet_{\DemandIdx}$ such that
\begin{equation*}
\sum_{\NodeIdx \in \NodeCut_{\DemandIdx \PathIdx}} \MasterVar_{\NodeIdx} = \DisaggSubprobVar{\DemandIdx}{\PathIdx},
\end{equation*}
since otherwise we can increase $\DisaggSubprobVar{\DemandIdx}{\PathIdx}$ and improve the objective value.
Now, let
\begin{equation*}
\NodeCut_{\DemandIdx} = \bigcup_{\PathIdx \in \PathSet_{\DemandIdx}} \NodeCut_{\DemandIdx \PathIdx}.
\end{equation*}
We have
\begin{equation*}
\AggV[\DemandIdx](\MasterVar) 
= \FlowVolume_{\DemandIdx} \AggSubprobVar_{\DemandIdx} 
\le \FlowVolume_{\DemandIdx} \sum_{\NodeIdx \in \NodeCut_{\DemandIdx}} \MasterVar_{\NodeIdx}
\le \FlowVolume_{\DemandIdx} \sum_{\PathIdx \in \PathSet_{\DemandIdx}} \sum_{\NodeIdx \in \NodeCut_{\DemandIdx \PathIdx}} \MasterVar_{\NodeIdx}
= \FlowVolume_{\DemandIdx} \sum_{\PathIdx \in \PathSet_{\DemandIdx}} \DisaggSubprobVar{\DemandIdx}{\PathIdx}
= \DisaggV[\DemandIdx](\MasterVar).
\end{equation*}
\item 
The equality follows from the definitions.
\item 
We have seen that $\AggV[\DemandIdx](\MasterVar) = \DisaggV[\DemandIdx](\MasterVar)$, so we will show $\TightV[\DemandIdx](\MasterVar) = \AggV[\DemandIdx](\MasterVar)$.
Observe that
\begin{align*}
\mathrm{subgraph}(\AggV[\DemandIdx]) = 
\Bigg\{ (\MasterVar, \theta) \in [0, 1]^{|\NodeSet|} \times \mathbb{R} : \FlowVolume_{\DemandIdx} \sum_{\NodeIdx \in \NodeCut} \MasterVar_{\NodeIdx} \ge \theta, \NodeCut \in \AggNodeCutSet{\DemandIdx} \Bigg\}.
\end{align*}
Similar to Proposition~\ref{prop_integrality_of_set_covering_polytope}, we can show that if $\PathSet_{\DemandIdx}$ consists of a single, simple path, all the vertices of $\mathrm{subgraph}(\AggV[\DemandIdx])$ are $\{ (\MasterVar, \AggV[\DemandIdx](\MasterVar)) : \MasterVar \in \{0, 1\}^{|\NodeSet|} \}$.
Thus,
\begin{align*}
\mathrm{subgraph}(\AggV[\DemandIdx]) = 
\mathrm{conv} \{ (\MasterVar, \AggV[\DemandIdx](\MasterVar)) : \MasterVar \in \{0, 1\}^{|\NodeSet|} \} + \{ (0, r) : r \le 0 \}.
\end{align*}
Lemma~\ref{lemma_tightv_formula} implies that $\mathrm{subgraph}(\TightV[\DemandIdx])$ has the same set of vertexes and
\begin{align*}
\mathrm{subgraph}(\TightV[\DemandIdx]) = 
\mathrm{conv} \{ (\MasterVar, \AggV[\DemandIdx](\MasterVar)) : \MasterVar \in \{0, 1\}^{|\NodeSet|} \} + \{ (0, r) : r \le 0 \}.
\end{align*}
Thus, $\AggV[\DemandIdx](\MasterVar) = \TightV[\DemandIdx](\MasterVar)$ for any $\MasterVar \in [0, 1]^{|\NodeSet|}$.
\end{enumerate}
\end{proof}

Proposition~\ref{prop_comparison_of_v_on_general_point} implies that the LP relaxation of~\eqref{eq_disagg_extended_formulation} may yield a weaker (i.e., looser) upper bound than that of~\eqref{eq_agg_extended_formulation}.
A natural question, then, is how large the difference between these two bounds can be.
The following proposition shows that the relative gap between the bounds can be arbitrarily large.

\begin{proposition}
\label{prop_looseness_of_relaxations}
\begin{enumerate}
\item
For any $\gamma > 1$, there exists a deviation FRLP instance such that
\begin{equation*}
\gamma \cdot \hspace{-0.5cm}\underbrace{\max_{x \in \bar{X}} \sum_{q \in Q} v_q^{\text{agg}}(x)}
_
{\text{UB from LP relaxation of~\eqref{eq_agg_extended_formulation}}}
\le
\underbrace{\max_{x \in \bar{X}} \sum_{q \in Q} v_q^{\text{disagg}}(x).}
_
{\text{UB from LP relaxation of~\eqref{eq_disagg_extended_formulation}}}
\end{equation*}
\item
For any $\gamma > 1$, there exists a deviation FRLP instance such that
\begin{equation*}
\gamma \cdot \hspace{-0.5cm} \underbrace{\max_{\MasterVar \in \LprMasterFeasSet} \sum_{\DemandIdx \in \DemandSet} \TightV[\DemandIdx](\MasterVar)}
_
{\text{UB from LP relaxation with $\TightV$}}
\le
\underbrace{\max_{\MasterVar \in \LprMasterFeasSet} \sum_{\DemandIdx \in \DemandSet} \AggV[\DemandIdx](\MasterVar).}
_
{\text{UB from LP relaxation of~\eqref{eq_agg_extended_formulation}}}
\end{equation*}
\end{enumerate}
\end{proposition}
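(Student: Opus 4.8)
\emph{Overview.} The plan is to prove each part by exhibiting, for the given $\gamma>1$, a single‑demand instance together with a suitable polytope $\LprMasterFeasSet$. For part~(i) I would use a ``bottleneck'' instance in which many paths of one demand are forced to use the same refuelling nodes, so that the disaggregated relaxation can spread the fractional path variables over all of these paths while the aggregated relaxation only ever sees the (small) shared node set. For part~(ii) I would first relate the gap between $\AggV[\DemandIdx]$ and $\TightV[\DemandIdx]$ to the ratio between the integer and the fractional covering numbers of the hypergraph $\AggNodeCutSet{\DemandIdx}$, and then realise an arbitrarily large such ratio as the aggregated cut family of a valid deviation FRLP instance.

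\emph{Part~(i).} Fix an integer $n\ge\gamma$ and a travel range $R$. Take nodes $\Origin,\Destination,a,p_1,\dots,p_n$, with edges $\{\Origin,p_k\}$ and $\{p_k,a\}$ of some small length $\varepsilon<R/4$ for every $k$ and the edge $\{a,\Destination\}$ of length $R$; let $\DemandSet=\{\DemandIdx\}$ with $\FlowVolume_{\DemandIdx}=1$, origin $\Origin$, destination $\Destination$, and $\PathSet_{\DemandIdx}=\{(\Origin,p_k,a,\Destination):k=1,\dots,n\}$. A direct check using the construction of \citet{CaparEtAl2013} shows that repeatedly traversing any such path forces a station at $a$ (to enter the range‑length edge $\{a,\Destination\}$) and at $\Destination$ (for the return leg) and nothing further, so $\DisaggNodeCutSet{\DemandIdx}{\PathIdx}$ has minimal members $\{a\}$ and $\{\Destination\}$ for every $\PathIdx\in\PathSet_{\DemandIdx}$, and hence so does $\AggNodeCutSet{\DemandIdx}$. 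By Proposition~\ref{prop_minimal_node_set},
\begin{equation*}
\AggV[\DemandIdx](\MasterVar)=\min\{1,\MasterVar_{a},\MasterVar_{\Destination}\}
\qquad\text{and}\qquad
\DisaggV[\DemandIdx](\MasterVar)=\min\{1,\,n\min\{\MasterVar_{a},\MasterVar_{\Destination}\}\}.
\end{equation*}
With $\LprMasterFeasSet=\{\MasterVar\in[0,1]^{|\NodeSet|}:\MasterVar_{a}\le 1/n\}$ one gets $\max_{\MasterVar\in\LprMasterFeasSet}\AggV[\DemandIdx](\MasterVar)=1/n$, attained at $\MasterVar_{a}=1/n$, $\MasterVar_{\Destination}=1$, whereas $\DisaggV[\DemandIdx](\MasterVar)=1$ at that same point, so $\max_{\MasterVar\in\LprMasterFeasSet}\DisaggV[\DemandIdx](\MasterVar)=1$. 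As there is a single demand, these are exactly the two LP‑relaxation bounds, and $\gamma\cdot(1/n)\le 1$ proves part~(i).

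\emph{Part~(ii), the reduction.} By Lemma~\ref{lemma_correctness_of_aggregation}, a set $D\subseteq\NodeSet$ serves $\DemandIdx$ if and only if $D$ meets every member of $\AggNodeCutSet{\DemandIdx}$; thus the least number $\kappa$ of stations that serve $\DemandIdx$ equals the least size of a hitting set of $\AggNodeCutSet{\DemandIdx}$. The affine function $\ell(\MasterVar)=\FlowVolume_{\DemandIdx}\bigl(\sum_{\NodeIdx\in\NodeSet}\MasterVar_{\NodeIdx}\bigr)/\kappa$ dominates $\AggV[\DemandIdx]$ on $\{0,1\}^{|\NodeSet|}$ --- if $\beta$ serves $\DemandIdx$ then $\sum_{\NodeIdx}\beta_{\NodeIdx}\ge\kappa$, and otherwise $\AggV[\DemandIdx](\beta)=0\le\ell(\beta)$ --- so the formula in Lemma~\ref{lemma_tightv_formula} gives $\TightV[\DemandIdx]\le\ell$ on all of $[0,1]^{|\NodeSet|}$. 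Consequently, if a point $\MasterVar^{*}$ meets every member of $\AggNodeCutSet{\DemandIdx}$ (so that $\AggV[\DemandIdx](\MasterVar^{*})=\FlowVolume_{\DemandIdx}$) but has small total mass $\mu=\sum_{\NodeIdx}\MasterVar^{*}_{\NodeIdx}$, then over $\LprMasterFeasSet=\{\MasterVar\in[0,1]^{|\NodeSet|}:\sum_{\NodeIdx}\MasterVar_{\NodeIdx}\le\mu\}$ the aggregated LP bound equals $\FlowVolume_{\DemandIdx}$ while the bound obtained with $\TightV$ is at most $\FlowVolume_{\DemandIdx}\,\mu/\kappa$. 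It remains to exhibit an instance for which $\kappa/\mu>\gamma$, that is, one in which $\AggNodeCutSet{\DemandIdx}$ is cheap to cover fractionally but expensive to cover integrally.

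\emph{Part~(ii), the instance.} Put $m=\lceil 4\gamma\rceil$, let $V=\{1,\dots,2m\}$ and $\NodeSet=\{\Origin,\Destination\}\cup V$, let the underlying graph be complete on $\NodeSet$ with every edge of length $R$, and take $\DemandSet=\{\DemandIdx\}$ with $\FlowVolume_{\DemandIdx}=1$, origin $\Origin$, destination $\Destination$, and $\PathSet_{\DemandIdx}=\{\rho_{T}:T\subseteq V,\ |T|=m\}$, where $\rho_{T}$ visits $\Origin$, then the $m$ nodes of $T$ in some order, then $\Destination$. Because every edge is at the range, every stop on $\rho_{T}$ is a forced refuelling, so the minimal members of $\DisaggNodeCutSet{\DemandIdx}{\rho_{T}}$ are $\{\Origin\}$, $\{\Destination\}$ and $\{t\}$ for $t\in T$; hence the minimal members of $\AggNodeCutSet{\DemandIdx}$ are $\{\Origin\}$, $\{\Destination\}$ and the minimal hitting sets of the clutter $\{T\subseteq V:|T|=m\}$, i.e.\ precisely the $(m{+}1)$‑subsets of $V$. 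A computation then gives $\kappa=m+2$ (one needs $\Origin$, $\Destination$, and $m$ nodes of $V$ to meet all $(m{+}1)$‑subsets), and the point $\MasterVar^{*}$ with $\MasterVar^{*}_{\Origin}=\MasterVar^{*}_{\Destination}=1$ and $\MasterVar^{*}_{t}=1/(m+1)$ for $t\in V$ meets every member of $\AggNodeCutSet{\DemandIdx}$ with mass $\mu=2+\tfrac{2m}{m+1}<4$. Applying the reduction with $\LprMasterFeasSet=\{\MasterVar\in[0,1]^{|\NodeSet|}:\sum_{\NodeIdx}\MasterVar_{\NodeIdx}\le 4\}$ yields aggregated bound $1$ and $\TightV$‑bound at most $4/(m+2)\le 1/\gamma$, proving part~(ii). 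Reading the cut families $\DisaggNodeCutSet{\DemandIdx}{\cdot}$ off the edge lengths is routine; the real obstacle is furnishing a path family whose aggregated cut hypergraph has unbounded integer‑to‑fractional covering ratio and checking that $\AggNodeCutSet{\DemandIdx}$ is exactly as claimed --- the choice $\PathSet_{\DemandIdx}=\binom{V}{m}$ does this because its blocker, the family of $(m{+}1)$‑subsets of a $2m$‑element set, costs less than $2$ to cover fractionally but $m$ to cover integrally.
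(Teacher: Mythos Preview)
Your proof is correct, and both parts use constructions that differ from the paper's.

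\textbf{Part (i).} The paper builds a ``fan plus chain'' graph on $2n$ nodes with all edges of length $d$ and a cardinality budget $\sum_j x_j\le 2$; the $n$ paths share a long chain, so the aggregated bound is at most $2f_1/(n{+}1)$ while the disaggregated bound reaches $f_1$ at $x\equiv 1/n$. Your instance is smaller and more direct: you make the $n$ paths share a single range-length edge, so that every $C_r^{\text{disagg}}$ has the \emph{same} minimal members $\{a\},\{d\}$, and you force the gap with the single-coordinate bound $x_a\le 1/n$ rather than a budget. This yields the exact ratio $n$ with almost no computation. The price is that your $\bar X$ is less ``physical'' than a budget constraint (integrally it simply forbids a station at $a$), but the paper's definition allows any polytope, so this is not an issue.

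\textbf{Part (ii).} Here the constructions differ more substantially. The paper uses a \emph{single} demand with a \emph{single} non-simple path that visits every permutation of a $2n$-element clique, thereby forcing $C_q^{\text{agg}}$ to contain all $n$-subsets of the clique; with budget $6$, the aggregated LP attains $f_1$ at a fractional point while $v^{\text{tight}}$ is bounded by $6f_1/n$ via a direct manipulation of the convex-combination formula in Lemma~\ref{lemma_tightv_formula}. You instead take $R_q$ to consist of one simple path per $m$-subset of a $2m$-element set in a complete graph with all edges at the range, so that the minimal members of $C_q^{\text{agg}}$ are $\{o\},\{d\}$ and all $(m{+}1)$-subsets. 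Your ``reduction'' paragraph makes explicit what the paper does implicitly: the gap between $\AggV[\DemandIdx]$ and $\TightV[\DemandIdx]$ is controlled by the ratio of the integer to the fractional covering number of $\AggNodeCutSet{\DemandIdx}$, and any affine majorant of $\AggV[\DemandIdx]$ on $\{0,1\}^{|N|}$ bounds $\TightV[\DemandIdx]$ everywhere. This abstraction is clean and reusable. The trade-off is that your $R_q$ has $\binom{2m}{m}$ paths, whereas the paper achieves the same cut structure with a single (astronomically long) path; both are admissible under the paper's framework, and your version avoids the somewhat exotic ``all permutations'' path.
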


\begin{proof}
\begin{enumerate}
\item 
Fix $\gamma > 1$ and let $n$ be a positive integer such that
\begin{equation*}
\gamma < \frac{n + 1}{2}.
\end{equation*}
Consider the following graph $G = (\NodeSet, \EdgeSet)$, where $\NodeSet = \{1, 2, \ldots, 2n\}$ and 
\begin{equation*}
\EdgeSet = \{ (1, \NodeIdx), (\NodeIdx, n + 2) : \NodeIdx = 2, 3, \ldots, n + 1 \} \cup \{ (\NodeIdx, \NodeIdx + 1) : \NodeIdx = n + 2, n + 3, \ldots, 2n - 1 \}.
\end{equation*}
Suppose each edge is of length $\TravelRange$, where $\TravelRange$ is the travel range. The graph is shown in Figure~\ref{fig_proof_of_prop_looseness_of_relaxations_1}.

\begin{figure}[htbp]
\centering
\begin{tikzpicture}[
xscale=3,
point/.style={circle,inner sep=0,minimum size=1mm,fill=black},
hypograph/.style={ultra thick,draw=Rhodamine},
]

\coordinate (v1) at (0.0, 0.0);
\coordinate (v2) at (0.5, 1.5);
\coordinate (v3) at (0.5, 0.6);
\coordinate (vnp1) at (0.5, -1.5);
\coordinate (vnp2) at (1.0, 0.0);
\coordinate (vnp3) at (1.5, 0.0);
\coordinate (vnp4) at (2.0, 0.0);
\coordinate (vnp5) at (2.6, 0.0);

\node[point,label={below:1}] at (v1) {};
\node[point,label={below:2}] at (v2) {};
\node[point,label={below:3}] at (v3) {};
\node[point,label={below:$n + 1$}] at (vnp1) {};
\node at ($(v3)!0.5!(vnp1)$) {$\vdots$};
\node[point,label={[xshift=0.2cm]below:$n + 2$}] at (vnp2) {};
\node[point,label={below:$n + 3$}] at (vnp3) {};
\node[point,label={below:$n + 4$}] at (vnp4) {};
\node[point,label={below:$2 n$}] at (vnp5) {};
\node at ($(vnp4)!0.5!(vnp5)$) {$\cdots$};

\draw
(v1) -- node[pos=0.6,label={[label distance=-0.15cm]above:\small $\TravelRange$}]{} (v2)
(v1) -- node[pos=0.6,label={[label distance=-0.15cm]above:\small $\TravelRange$}]{} (v3)
(v1) -- node[pos=0.6,label={[label distance=-0.15cm]above:\small $\TravelRange$}]{} (vnp1)
(v2) -- node[pos=0.4,label={[label distance=-0.15cm]above:\small $\TravelRange$}]{} (vnp2)
(v3) -- node[pos=0.4,label={[label distance=-0.15cm]above:\small $\TravelRange$}]{} (vnp2)
(vnp1) -- node[pos=0.4,label={[label distance=-0.15cm]above:\small $\TravelRange$}]{} (vnp2)
(vnp2) -- node[above]{\small $\TravelRange$} (vnp3)
(vnp3) -- node[above]{\small $\TravelRange$} (vnp4)
(vnp4) -- +(0.15, 0.0)
(vnp5) -- +(-0.15, 0.0)
;

\end{tikzpicture}
\caption{Graph $G$, where $\TravelRange$ is the travel range}
\label{fig_proof_of_prop_looseness_of_relaxations_1}
\end{figure}

Let $\DemandSet = \{1\}$, where the origin and the destination of the demand are $1$ and $2 n$, respectively, and $\PathSet_1$ contains all the $(1, 2n)$-path of length $n d$:
\begin{equation*}
\PathSet_1 = \{r_{\NodeIdx} := (1, \NodeIdx + 1, n + 2, n + 3, \ldots, 2n) : \NodeIdx = 1, 2, \ldots, n \}.
\end{equation*}
Finally, suppose that
\begin{equation*}
\LprMasterFeasSet = \left\{ \MasterVar \in \mathbb{R}^{2n} : \sum_{\NodeIdx = 1}^{2 n} \MasterVar_{\NodeIdx} \le 2 \right\}.
\end{equation*}
On this instance, following the argument by \citet{CaparEtAl2013}, for $\NodeIdx = 1, 2, \ldots, n$, we have
\begin{equation*}
\DisaggNodeCutSet{1}{r_{\NodeIdx}} = \Big\{ \{ 1 \}, \{ j + 1 \}, \{ n + 2 \}, \{ n + 3\}, \ldots, \{ 2 n \} \Big\}.
\end{equation*}
That is, each path $r \in \PathSet_1$ is traversable if and only if all nodes in $r$ have charging stations.
Let
\begin{align*}
\MasterVar_{\NodeIdx}' &= \frac{1}{n}, \ \ \NodeIdx = 1, 2, \ldots, 2 n.
\end{align*}
Note that $\MasterVar' \in \LprMasterFeasSet$.
Since
\begin{align*}
\DisaggSubprobVar{1}{\PathIdx}' = \frac{1}{n}, \ \ \PathIdx \in \PathSet_1,
\end{align*}
is feasible for the LP in the RHS of~\eqref{eq_disagg_formulation} for $\MasterVar = \MasterVar'$,
\begin{equation*}
\max_{x \in \bar{X}} \sum_{q \in Q} v_q^{\text{disagg}}(x)
\ge v_1^{\text{disagg}}(x')
\ge \sum_{\PathIdx \in \PathSet_1} \FlowVolume_1 \DisaggSubprobVar{1}{\PathIdx}' = \FlowVolume_1.
\end{equation*}
Next, we will obtain a bound on $\AggV[1]$.
It is clear that
\begin{equation*}
\{ 1 \}, \{ 2, 3, \ldots, n + 1 \}, \{ n + 2 \}, \{ n + 3\}, \ldots, \{ 2 n \} \in \AggNodeCutSet{1}.
\end{equation*}
Therefore, for any $\MasterVar \in \LprMasterFeasSet$, we have
\begin{align*}
\AggV[1](\MasterVar)
&= \max_{\AggSubprobVar_1 \in [0, 1]} \left\{ \FlowVolume_1 \AggSubprobVar_1 : \sum_{\NodeIdx \in \NodeCut} \MasterVar_{\NodeIdx} \ge \AggSubprobVar_1, \ \ \forall \NodeCut \in \AggNodeCutSet{1} \right\} \\
&= \max_{\AggSubprobVar_1 \in [0, 1]} \left\{ \FlowVolume_1 \AggSubprobVar_1 : \MasterVar_1 \ge \AggSubprobVar_1, \sum_{\NodeIdx = 2}^{n + 1} \MasterVar_{\NodeIdx} \ge \AggSubprobVar_1, \MasterVar_{n + 2} \ge \AggSubprobVar_1, \MasterVar_{n + 3} \ge \AggSubprobVar_1, \ldots, \MasterVar_{2 n} \ge \AggSubprobVar_1 \right\}.
\end{align*}
Sum up the constraints to get
\begin{align*}
&\max_{\AggSubprobVar_1 \in [0, 1]} \left\{ \FlowVolume_1 \AggSubprobVar_1 : \MasterVar_1 \ge \AggSubprobVar_1, \sum_{\NodeIdx = 2}^{n + 1} \MasterVar_{\NodeIdx} \ge \AggSubprobVar_1, \MasterVar_{n + 2} \ge \AggSubprobVar_1, \MasterVar_{n + 3} \ge \AggSubprobVar_1, \ldots, \MasterVar_{2 n} \ge \AggSubprobVar_1 \right\} \\
&\qquad\le \max_{\AggSubprobVar_1 \in [0, 1]} \left\{ \FlowVolume_1 \AggSubprobVar_1 : \sum_{\NodeIdx = 1}^{2 n} \MasterVar_{\NodeIdx} \ge (n + 1) \AggSubprobVar_1 \right\}.
\end{align*}
For $\MasterVar \in \LprMasterFeasSet$, we have $\sum_{\NodeIdx = 1}^{2 n} \MasterVar_{\NodeIdx} \le 2$.
Therefore,
\begin{align*}
\max_{\AggSubprobVar_1 \in [0, 1]} \left\{ \FlowVolume_1 \AggSubprobVar_1 : \sum_{\NodeIdx = 1}^{2 n} \MasterVar_{\NodeIdx} \ge (n + 1) \AggSubprobVar_1 \right\}
\le \max_{\AggSubprobVar_1 \in [0, 1]} \left\{ \FlowVolume_1 \AggSubprobVar_1 : 2 \ge (n + 1) \AggSubprobVar_1 \right\}
\le \frac{2}{n + 1} \FlowVolume_1.
\end{align*}
By combining the above inequalities, we get
\begin{equation*}
\max_{x \in \bar{X}} \sum_{q \in Q} v_q^{\text{agg}}(x) \le \frac{2}{n + 1} \FlowVolume_1.
\end{equation*}
Therefore, we have
\begin{equation*}
\gamma \max_{x \in \bar{X}} \sum_{q \in Q} v_q^{\text{agg}}(x)
\le \frac{n + 1}{2} \max_{x \in \bar{X}} \sum_{q \in Q} v_q^{\text{agg}}(x)
\le \FlowVolume_1 \le \max_{x \in \bar{X}} \sum_{q \in Q} v_q^{\text{disagg}}(x). 
\end{equation*}
\item 
Fix $\gamma > 1$ and let $n \ge \max\{6 \gamma, 2\}$ be an integer.
Consider a graph $G = (\NodeSet, \EdgeSet)$, where $\NodeSet = \{1, 2, \ldots, 2 n + 4\}$ and
\begin{equation*}
\EdgeSet = \{(1, 2), (2, 3), (3, 2 n + 3), (2 n + 3, 2 n + 4)\} \cup \{ (i, j) : 3 \le i, j \le 2 n + 2, i \not= j \}.
\end{equation*}
The edges $1$-$2$ and $(2 n + 3)$-$(2 n + 4)$ are of length $\TravelRange - \delta$, edges $2$-$3$ and $3$-$(2 n + 1)$ are of length $2 \delta$ and all the other edges are of length $(\TravelRange - \delta) / n$, where $0 < \delta < 1 / n^2$ and $\TravelRange$ is the travel range.
The graph is illustrated in Figure~\ref{fig_proof_of_prop_looseness_of_relaxations_2_general_n}.

\begin{figure}[htbp]
\centering
\begin{tikzpicture}[
xscale=3,
point/.style={circle,inner sep=0,minimum size=1mm,fill=black},
hypograph/.style={ultra thick,draw=Rhodamine},
]

\coordinate (v1) at (-1.5, 0.0);
\coordinate (v2) at (-0.5, 0.0);
\coordinate (v3) at ( 0.0, 0.0);
\coordinate (v4) at (-0.6495, 1.5);
\coordinate (v5) at (-0.6495, 3.75);
\coordinate (v6) at ( 0.0, 5.4);
\coordinate (v2np2) at ( 0.6495, 1.5);
\coordinate (v2np1) at ( 0.6495, 3.75);
\coordinate (v2np3) at ( 0.5, 0.0);
\coordinate (v2np4) at ( 1.5, 0.0);


\node[point,label={below:1}] at (v1) {};
\node[point,label={below:2}] at (v2) {};
\node[point,label={below:3}] at (v3) {};
\node[point,label={left:4}] at (v4) {};
\node[point,label={left:5}] at (v5) {};
\node[point,label={right:$2 n + 1$}] at (v2np1) {};
\node[point,label={right:$2 n + 2$}] at (v2np2) {};
\node[point,label={below:$2 n + 3$}] at (v2np3) {};
\node[point,label={below:$2 n + 4$}] at (v2np4) {};



\draw
(v1) -- node[below]{\small $\TravelRange - \delta$} (v2)
(v2) -- node[below]{\small $2\delta$} (v3)
(v3) -- node[pos=0.4,label={[label distance=0.1cm]left:\small $(\TravelRange - \delta) / n$}] {} (v4)
(v3) -- node[pos=0.4,label={[label distance=0.1cm]right:\small $(\TravelRange - \delta) / n$}] {} (v2np2)
(v3) -- node[pos=1.5,rotate=-65] {$\cdots$}($(v3)!0.15!(v5)$)
(v3) -- node[pos=1.5,rotate=65] {$\cdots$} ($(v3)!0.15!(v2np1)$)
(v3) -- node[pos=1.5,rotate=90] {$\cdots$}($(v3)!0.12!(v6)$)
(v4) -- node[pos=0.5,label={[label distance=0.1cm]left:\small $(\TravelRange - \delta) / n$}] {} (v5)
(v2np1) -- node[pos=0.5,label={[label distance=0.1cm]right:\small $(\TravelRange - \delta) / n$}] {} (v2np2)
(v4) -- node[pos=1.7,rotate=65] {$\cdots$} ($(v4)!0.1!(v6)$)
(v2np2) -- node[pos=1.7,rotate=-65] {$\cdots$}($(v2np2)!0.1!(v6)$)
(v4) -- node[pos=1.6,rotate=35] {$\cdots$} ($(v4)!0.1!(v2np1)$)
(v2np2) -- node[pos=1.6,rotate=-35] {$\cdots$} ($(v2np2)!0.1!(v5)$)
(v4) -- node[pos=1.6,rotate=0] {$\cdots$} ($(v4)!0.1!(v2np2)$)
(v2np2) -- node[pos=1.6,rotate=0] {$\cdots$} ($(v2np2)!0.1!(v4)$)
(v5) -- node[pos=1.5,rotate=-65] {$\cdots$} ($(v5)!0.15!(v3)$)
(v2np1) -- node[pos=1.5,rotate=65] {$\cdots$} ($(v2np1)!0.15!(v3)$)
(v5) -- node[pos=1.6,rotate=35] {$\cdots$} ($(v5)!0.18!(v6)$)
(v2np1) -- node[pos=1.6,rotate=-35] {$\cdots$}($(v2np1)!0.18!(v6)$)
(v5) -- node[pos=1.7,rotate=0] {$\cdots$} ($(v5)!0.1!(v2np1)$)
(v2np1) -- node[pos=1.7,rotate=0] {$\cdots$} ($(v2np1)!0.1!(v5)$)
(v5) -- node[pos=1.7,rotate=-35] {$\cdots$} ($(v5)!0.1!(v2np2)$)
(v2np1) -- node[pos=1.7,rotate=35] {$\cdots$} ($(v2np1)!0.1!(v4)$)
(v3) -- node[below]{\small $2\delta$} (v2np3)
(v2np3) -- node[below]{\small $\TravelRange - \delta$} (v2np4)
;

\end{tikzpicture}
\caption{Graph $G$, where $0 < \delta < 1 / n^2$ and $\TravelRange$ is the travel range}
\label{fig_proof_of_prop_looseness_of_relaxations_2_general_n}
\end{figure}

Suppose that $\DemandSet = \{ 1 \}$, where the origin and the destination of the demand are 1 and $2 n + 4$, respectively.
Furthermore, assume that $\PathSet_1$ contains a single path $r = (1, 2, r', 2 n + 3, 2 n + 4)$, where $r'$ is a path that starts and ends at $3$ and contains all the permutations of $\{3, 4, \ldots, 2 n + 2\}$.
For example, in case of $n = 3$, we may have
\begin{equation*}
r = (1, 2, \ \
3, 4, 5, 6, 7, 8, \ \
3, 4, 5, 6, 8, 7, \ \
\ldots, \ \
8, 7, 6, 5, 4, 3, \ \
9, 10).
\end{equation*}
We have
\begin{equation*}
\AggNodeCutSet{1} = \Big\{
\{ 1 \},
\{ 2 \},
\{ 2 n + 3 \},
\{ 2 n + 4 \}
\Big\} \cup
\Big\{
S : S \subset \{3, 4, \ldots, 2 n + 2\}, |S| = n
\Big\}.
\end{equation*}
Note that the demand is served if and only if
\begin{equation}
\MasterVar_1
= \MasterVar_2
= \MasterVar_{n + 3}
= \MasterVar_{n + 4}
= 1
\ \MyAnd \
\sum_{\NodeIdx = 3}^{2 n + 2} \MasterVar_{\NodeIdx} \ge n + 1.
\label{v_disagg_loose_example_demand_condition}
\end{equation}
Now, let us consider $\LprMasterFeasSet$ given by a simple budget constraint
\begin{equation*}
\LprMasterFeasSet = \left\{ \MasterVar \in \mathbb{R}^{2n} : \sum_{\NodeIdx = 1}^{2 n + 4} \MasterVar_{\NodeIdx} \le 6 \right\}.
\end{equation*}
It is straightforward to see that  $\AggV[1](x') = \FlowVolume_1$ for
\begin{equation*}
\MasterVar_{\NodeIdx}' = \begin{cases}
1, & \NodeIdx \in \{ 1, 2, 2 n + 3, 2 n + 4 \}, \\
1 / n, & \text{otherwise}.
\end{cases}
\end{equation*}
We will obtain a bound on $\TightV[1](\MasterVar)$ for $\MasterVar \in \LprMasterFeasSet$.
In the proof of Lemma~\ref{lemma_tightv_formula}, we showed that
\begin{align*}
\TightV[1](\MasterVar)
&= \max_{\alpha} \left\{ \sum_{i = 1}^{2^{|\NodeSet|}} \alpha_i \AggV[1](\beta_i) 
:
\sum_{i = 1}^{2^{|\NodeSet|}} \alpha_i \beta_i = \MasterVar,
\sum_{i = 1}^{2^{|\NodeSet|}} \alpha_i = 1, \alpha \ge 0
\right\},
\end{align*}
where $\{0, 1\}^{|\NodeSet|} = \{ \beta_i : i = 1, 2, \ldots, 2^{|\NodeSet|} \}$.
Let $I_{1}, I_{2}$ be a partition of $\{ 1, 2, \ldots, 2^{|\NodeSet|} \}$ such that
\begin{align}
\| \beta_i \|_1 \le n - 1, \ \ \forall i \in I_{1}, \qquad
\| \beta_i \|_1 \ge n, \ \ \forall i \in I_{2}.
\label{v_disagg_loose_example_definition_of_partition}
\end{align}
It follows from~\eqref{v_disagg_loose_example_demand_condition} that
\begin{equation*}
\AggV[1](\beta_i) = \begin{cases}
0, & i \in I_{1}, \\
\FlowVolume_1, & i \in I_{2},
\end{cases}
\end{equation*}
and thus, for $\MasterVar \in \LprMasterFeasSet$,
\begin{align*}
\TightV[1](\MasterVar)
&= \max_{\alpha} \left\{ \FlowVolume_1 \sum_{i \in I_{2}} \alpha_i
:
\sum_{i = 1}^{2^{|\NodeSet|}} \alpha_i \beta_i = \MasterVar,
\sum_{i = 1}^{2^{|\NodeSet|}} \alpha_i = 1, \alpha \ge 0
\right\}.
\end{align*}
Now, we will obtain a bound by constructing relaxations.
First, observe that
\begin{align*}
\TightV[1](\MasterVar)
&= \max_{\alpha} \left\{ \FlowVolume_1 \sum_{i \in I_{2}} \alpha_i
:
\sum_{i = 1}^{2^{|\NodeSet|}} \alpha_i \beta_i = \MasterVar,
\sum_{i = 1}^{2^{|\NodeSet|}} \alpha_i = 1, \alpha \ge 0
\right\} \\
&\le \max_{\alpha} \left\{ \FlowVolume_1 \sum_{i \in I_{2}} \alpha_i
:
\left\| \sum_{i = 1}^{2^{|\NodeSet|}} \alpha_i \beta_i \right\|_1 = \| \MasterVar \|_1,
\sum_{i = 1}^{2^{|\NodeSet|}} \alpha_i = 1, \alpha \ge 0
\right\} \\
&= \max_{\alpha} \left\{ \FlowVolume_1 \sum_{i \in I_{2}} \alpha_i
:
\sum_{i = 1}^{2^{|\NodeSet|}} \alpha_i \left\| \beta_i \right\|_1 = \| \MasterVar \|_1,
\sum_{i = 1}^{2^{|\NodeSet|}} \alpha_i = 1, \alpha \ge 0
\right\} \\
&= \max_{\alpha} \left\{ \FlowVolume_1 \sum_{i \in I_{2}} \alpha_i
:
\sum_{i \in I_{1}} \alpha_i \left\| \beta_i \right\|_1 + \sum_{i \in I_{2}} \alpha_i \left\| \beta_i \right\|_1 = \| \MasterVar \|_1,
\sum_{i = 1}^{2^{|\NodeSet|}} \alpha_i = 1, \alpha \ge 0
\right\} \\
&\le \max_{\alpha} \left\{ \FlowVolume_1 \sum_{i \in I_{2}} \alpha_i
:
\sum_{i \in I_{2}} \alpha_i \left\| \beta_i \right\|_1 \le \| \MasterVar \|_1,
\sum_{i = 1}^{2^{|\NodeSet|}} \alpha_i = 1, \alpha \ge 0
\right\}.
\end{align*}
Since $\| \beta_i \|_1 \ge n$ for $i \in I_{2}$ and $\| \MasterVar \|_1 \le 6$ for $\MasterVar \in \LprMasterFeasSet$, we have
\begin{align*}
& \max_{\alpha} \left\{ \FlowVolume_1 \sum_{i \in I_{2}} \alpha_i
:
\sum_{i \in I_{2}} \alpha_i \left\| \beta_i \right\|_1 \le \| \MasterVar \|_1,
\sum_{i = 1}^{2^{|\NodeSet|}} \alpha_i = 1, \alpha \ge 0
\right\} \\
&\qquad\le \max_{\alpha} \left\{ \FlowVolume_1 \sum_{i \in I_{2}} \alpha_i
:
n \sum_{i \in I_{2}} \alpha_i \le 6,
\sum_{i = 1}^{2^{|\NodeSet|}} \alpha_i = 1, \alpha \ge 0
\right\}.
\end{align*}
Therefore, it follows that for $\MasterVar \in \LprMasterFeasSet$
\begin{equation*}
\TightV[1](x) \le \frac{6}{n} \FlowVolume_1.
\end{equation*}
Thus,
\begin{equation*}
\gamma \max_{\MasterVar \in \LprMasterFeasSet} \sum_{\DemandIdx \in \DemandSet} \TightV[\DemandIdx](\MasterVar)
\le
\frac{n}{6} \max_{\MasterVar \in \LprMasterFeasSet} \sum_{\DemandIdx \in \DemandSet} \TightV[\DemandIdx](\MasterVar)
\le
\FlowVolume_1
=
\max_{\MasterVar \in \LprMasterFeasSet} \sum_{\DemandIdx \in \DemandSet} \AggV[\DemandIdx](\MasterVar).
\end{equation*}
\end{enumerate}
\end{proof}

\section{Cyclic FRLP}\label{sec_cyclic_frlp}

\subsection{Formulations}\label{subsec_formulations_of_the_cyclic_frlp}

In Sections~\ref{sec_formulations_of_frlp} and~\ref{sec_strength_of_formulations}, we studied the FRLP under the standard setting considered in the literature (e.g., \citet{ArslanEtAl2019} and \citet{GopfertAndBock2019}), where it is assumed that drivers use the same path in both directions, from the origin to the destination and back.
An OD pair is thus considered served if drivers can traverse the path repeatedly. 
Furthermore, this symmetric routing requires all roads to be undirected.
However, in many practical situations, it may be natural to consider the possibility that drivers use different paths for their outbound and return trips. 
In this setting, an OD path is considered served if one can repeat a cycle from the origin to the destination and back, without running out of battery.
We call this extension of the FRLP, which allows such flexibility, as the \emph{cyclic FRLP}; 
see Figure~\ref{fig_cyclic_frlp_illustration} for an illustration.
In contrast, we refer to the deviation FRLP (including the classic FRLP), where paths are symmetric, as the \emph{original FRLP}.
In this section, we develop a formulation and solution method for the cyclic FRLP.

\begin{figure}[htbp]
\centering
\begin{tikzpicture}[
xscale=1,
point/.style={circle,inner sep=0,minimum size=1mm,fill=black},
nodepadding/.style={circle,inner sep=0,minimum size=6mm},
unselected/.style={draw=black, densely dashed},
selected/.style={draw=black},
hypograph/.style={ultra thick,draw=Rhodamine},
]
\coordinate (v0) at (0.0, 0.0);
\coordinate (v1) at (5.0, 2.0);

\node[nodepadding] (n0) at (v0) {};
\node[nodepadding] (n1) at (v1) {};

\node[point,label={below:1}] at (v0) {};
\node[point,label={below right:2}] at (v1) {};

\draw[unselected] (n0) to [out=70,in=150] node [below] {} (n1);

\draw[unselected] (n0) to node [below] {} (n1);
\draw[unselected] (n0) to [bend right=45] node [below] {} (n1);

\begin{scope}[shift={(-0.08,0.16)}]
\coordinate (v0) at (0.0, 0.0);
\coordinate (v1) at (5.0, 2.0);

\node[nodepadding] (n0) at (v0) {};
\node[nodepadding] (n1) at (v1) {};

\draw[selected, -{Latex[length=2mm]}] (n0) to [out=70,in=150] (n1);
\end{scope}

\begin{scope}[shift={(-0.08,0.16)}]
\coordinate (v0) at (0.0, 0.0);
\coordinate (v1) at (5.0, 2.0);

\node[nodepadding] (n0) at (v0) {};
\node[nodepadding] (n1) at (v1) {};

\draw[selected, {Latex[length=2mm]}-] (n0) to (n1);
\end{scope}

\end{tikzpicture}
\caption{OD pair with origin and destination at nodes $1$ and $2$, respectively; dashed lines indicate possible OD paths, and solid directed lines show a feasible route for the cyclic FRLP. In the cyclic FRLP, drivers may choose different paths for their outbound and inbound trips. An OD pair is served if there is a cycle that can be repeated without running out of battery (recall Figure~\ref{fig_original_frlp_illustration} for comparison).}
\label{fig_cyclic_frlp_illustration}
\end{figure}

For example, consider a case where a driver deviates from the shortest path on the outbound trip to visit a charging station.
In the original FRLP, it is assumed that the driver must also follow the same path on the return trip and visit the same charging station again.
However, if the battery level is sufficiently high at the destination, the driver may choose a shorter path for the return trip, deferring recharging to the next trip.
The following example illustrates such a scenario, highlighting the difference between the original and cyclic FRLP.

\begin{example}

Consider the undirected graph shown in Figure~\ref{fig_frlp_original_vs_cyclic}, with a single demand \(\DemandIdx_{1}\) from node 1 to node 2 (i.e., \(\DemandSet = \{ \DemandIdx_{1} \}\)).  
Assume that the travel range is \(\TravelRange\).

\begin{figure}[htbp]
\centering
\begin{tikzpicture}[
point/.style={circle,inner sep=0,minimum size=1mm,fill=black},
hypograph/.style={ultra thick,draw=Rhodamine},
]
\coordinate (v0) at (0.0, 0.0);
\coordinate (v1) at (4.0, 0.0);
\coordinate (v2) at (2.0, 1.5);
\coordinate (v3) at (2.0,-1.5);

\node[point,label={below:1}] at (v0) {};
\node[point,label={below:2}] at (v1) {};
\node[point,label={above:3}] at (v2) {};
\node[point,label={below:4}] at (v3) {};

\draw 
    (v0) -- node[below]{\small $\TravelRange / 3$} (v1)
    (v0) -- node[above left]{\small $\TravelRange / 4$} (v2)
    (v1) -- node[above right]{\small $\TravelRange / 4$} (v2)
    (v0) -- node[below left]{\small $\TravelRange / 3$} (v3)
    (v1) -- node[below right]{\small $\TravelRange / 3$} (v3)
    ;


\end{tikzpicture}
\caption{Graph $G$, where $\TravelRange$ is the travel range}
\label{fig_frlp_original_vs_cyclic}
\end{figure}

\begin{table}[htbp]
\begin{minipage}[m]{0.46\linewidth}
\centering
\captionof{table}{Paths from node 1 to node 2 within a 50\% deviation from the shortest path}
\label{tab_frlp_original_vs_cyclic_route_symmetric_frlp_path_list}
\begin{tabular*}{\linewidth}{@{\extracolsep{\fill}}lrr}
\toprule
Path & Length & Deviation \\
\midrule
$(1, 2)$ & $(1/3) \TravelRange$ & 0\% \\
$(1, 3, 2)$ & $(1/2) \TravelRange$ & 50\% \\
\bottomrule
\end{tabular*}
\end{minipage}%
\hfill
\begin{minipage}[m]{0.46\linewidth}
\centering
\captionof{table}{Cycles from node 1 to 2 and back, within a 50\% deviation from the shortest cycle}
\label{tab_frlp_original_vs_cyclic_route_asymmetric_frlp_cycle_list}
\begin{tabular*}{\linewidth}{@{\extracolsep{\fill}}lrr}
\toprule
Cycle & Length & Deviation \\
\midrule
$(1, 2, 1)$ & $(2/3) \TravelRange$ & 0\% \\
$(1, 3, 2, 3, 1)$ & $\TravelRange$ & 50\% \\
$(1, 2, 3, 1)$ & $(5/6) \TravelRange$ & 25\% \\
$(1, 2, 4, 1)$ & $\TravelRange$ & 50\% \\
\bottomrule
\end{tabular*}
\end{minipage}
\end{table}

In the original FRLP, if drivers are willing to take any path that is at most 50\% longer than the shortest one, there are two possible paths, as shown in Table~\ref{tab_frlp_original_vs_cyclic_route_symmetric_frlp_path_list}.  
Note that path $(1, 4, 2)$ has length \(2\TravelRange\), which exceeds the 50\% deviation limit.

Now consider the cyclic FRLP.
If drivers are willing to take any cycle that is at most 50\% longer than the shortest round-trip cycle, there are four possible cycles, as listed in Table~\ref{tab_frlp_original_vs_cyclic_route_asymmetric_frlp_cycle_list} (excluding symmetric duplicates).
The first two cycles correspond to the paths considered in the original FRLP.

Suppose that a charging station is available only at node 4.  
Under the original FRLP, demand \(\DemandIdx_{1}\) is considered unserved, as no feasible path passes through node 4.  
In contrast, under the cyclic FRLP, demand \(\DemandIdx_{1}\) is considered served, since the cycle $(1, 2, 4, 1)$ is feasible and can be repeated without violating the travel range constraint.
\end{example}

In the original FRLP, \citet{CaparEtAl2013} provided an argument that for a given path, one can construct a family of node sets $\NodeCutSet \subset \PowerSet{\NodeSet}$ such that the path is traversable repeatedly if and only if $\sum_{\NodeIdx \in \NodeCut} \MasterVar_{\NodeIdx} \ge 1$ for all $\NodeCut \in \NodeCutSet$.
Thus, for each $\DemandIdx \in \DemandSet$ and $\PathIdx \in \PathSet_{\DemandIdx}$, one can construct either $\AggNodeCutSet{\PathIdx}$ or $\DisaggNodeCutSet{\DemandIdx}{\DemandIdx}$ to encode the condition for $\PathIdx$ to be traversable, or for $\DemandIdx$ to be considered served, where $\PathSet_{\DemandIdx}$ denotes the set of allowable paths for demand $\DemandIdx$.

It is straightforward to extend the argument to the cyclic FRLP.
Given a cycle, one can construct a corresponding set $\NodeCutSet \subset \PowerSet{\NodeSet}$ such that the cycle is traversable repeatedly if and only if $\sum_{\NodeIdx \in \NodeCut} \MasterVar_{\NodeIdx} \ge 1$ for all $\NodeCut \in \NodeCutSet$.
In the cyclic FRLP, each demand $\DemandIdx \in \DemandSet$ is associated with a set of cycles that drivers of $\DemandIdx$ are willing to use.
We also use $\CycleSet_{\DemandIdx}$ to denote this set of cycles.
Then, for each $\DemandIdx \in \DemandSet$ and $\CycleIdx \in \CycleSet_{\DemandIdx}$, one can construct $\DisaggNodeCutSet{\DemandIdx}{\CycleIdx}$, or alternatively $\AggNodeCutSet{\DemandIdx}$, and and use them to formulate~\eqref{eq_disagg_extended_formulation} or~\eqref{eq_agg_extended_formulation}.

Observe that the same formulations~\eqref{eq_disagg_extended_formulation} and~\eqref{eq_agg_extended_formulation} can be used for both the original and cyclic FRLP.
Furthermore, the original FRLP is a special case of the cyclic FRLP in which all cycles are ``symmetric'' (drivers use the same path in both directions).
Thus, many of the results established in Section~\ref{sec_strength_of_formulations} extend naturally to the cyclic FRLP.
To be more specific, the bound given by the LP relaxation of formulation~\eqref{eq_agg_extended_formulation} is tighter than that of formulation~\eqref{eq_disagg_extended_formulation} (Proposition~\ref{prop_comparison_of_v_on_general_point} (i)), and the difference between them can be arbitrarily large (Proposition~\ref{prop_looseness_of_relaxations}).\footnote{We note that assertions (ii) and (iii) in Proposition~\ref{prop_comparison_of_v_on_general_point} require drivers to use a single, simple path between the origin and the destination, which does not generalize to a single, simple cycle in case of the cyclic FRLP.}
Therefore, we focus on formulation~\eqref{eq_agg_extended_formulation} in the remainder of this section.

\subsection{Solution Method}\label{subsec_solution_method}

\citet{ArslanEtAl2019} and \citet{GopfertAndBock2019} solve formulation~\eqref{eq_agg_extended_formulation} for the original FRLP using branch-and-cut algorithms.
In this section, we demonstrate how to adapt the method proposed by \citet{ArslanEtAl2019} to solve the cyclic FRLP using the same formulation.
Among the variants proposed in their work, we focus on the one that yields the best performance in their experiments.

In a branch-and-cut algorithm, formulation~\eqref{eq_agg_extended_formulation} is initially considered without constraint~\eqref{eq_agg_extended_formulation_constraint}, and a standard branch-and-bound algorithm is applied.
Each time the branch-and-bound algorithm finds a candidate integer solution $(x', y')$, the algorithm checks whether any constraint~\eqref{eq_agg_extended_formulation_constraint} is violated.
Specifically, it searches for  $\DemandIdx \in \DemandSet$ and $\NodeCut \in \AggNodeCutSet{\DemandIdx}$ such that the corresponding constraint is not satisfied at $(x', y')$.
If a violated constraint is found, it is added to the model, and the current solution is discarded.
Otherwise, the candidate solution is feasible for the full problem.

While the scheme of the branch-and-cut algorithm outlined above is quite generic, the task of finding violated constraints for a given integer solution is typically problem-specific, as it must be done efficiently.
We now describe this separation procedure. 
Let $(x', y')$ be a candidate integer solution for which we aim to identify a violated constraint~\eqref{eq_agg_extended_formulation_constraint} or its feasibility.
Fix $\DemandIdx \in \DemandSet$ such that $y_{\DemandIdx}' = 1$
(if $y_{\DemandIdx}' = 0$, constraint~\eqref{eq_agg_extended_formulation_constraint} is satisfied for all $\NodeCut \in \AggNodeCutSet{\DemandIdx}$).
Define $\NodeCut' = \{ \NodeIdx : x_{\NodeIdx}' = 0 \}$, the set of nodes where no charging station is installed in the candidate solution.
We first check if there exists a cycle in $\CycleSet_{\DemandIdx}$ traversable repeatedly, assuming we only install charging stations at nodes in $\NodeSet \setminus \NodeCut'$.
If such a cycle exists, then the constraint is satisfied for any $\NodeCut \in \AggNodeCutSet{\DemandIdx}$.
Otherwise, the constraint with $\NodeCut = \NodeCut'$ is violated.

In light of Proposition~\ref{prop_minimal_node_set}, it is desirable to find a minimal node set that violates constraint~\eqref{eq_agg_extended_formulation_constraint}.
To this end, we iteratively attempt to remove nodes from $\NodeCut'$.
For each $\NodeIdx \in \NodeCut'$, we check whether there exists a traversable cycle in $\CycleSet_{\DemandIdx}$ using the modified node set $\NodeCut' \setminus \{ \NodeIdx \}$ (i.e., we search for a traversable cycle assuming we install charging stations at nodes in $\NodeSet \setminus (\NodeCut' \setminus \{ \NodeIdx \})$).
If no such cycle exists, we can replace $\NodeCut'$ with $\NodeCut' \setminus \{ \NodeIdx \}$.
The final (reduced) $\NodeCut'$ is then added to the set of violated constraints.
The pseudocode is provided in Algorithm~\ref{algo_separation}.

\begin{algorithm}
\caption{Separation of violated constraints}\label{algo_separation}
\renewcommand{\algorithmicrequire}{\textbf{Input:}}
\begin{algorithmic}[1]
\Require Set of demands $Q$, set of cycles $\CycleSet$, candidate integer solution $(x', y')$
\Ensure Set $\mathcal{O}$ such that for each $(\DemandIdx, \NodeCut) \in \mathcal{O}$ constraint~\eqref{eq_agg_extended_formulation_constraint} is violated
\State $\mathcal{O} \gets \emptyset$
\For{$\DemandIdx \in \DemandSet$ such that $y_{\DemandIdx} = 1$}
\State $\NodeCut' \gets \{ \NodeIdx : x_{\NodeIdx}' = 0 \}$
\If{there exists a cycle in $\CycleSet_{\DemandIdx}$ traversable using nodes in $\NodeSet \setminus \NodeCut'$}
\State \textbf{continue}
\EndIf
\For{$\NodeIdx \in \NodeCut'$}
\If{No cycle in $\CycleSet_{\DemandIdx}$ is traversable using nodes in $\NodeSet \setminus (\NodeCut' \setminus \{ \NodeIdx \})$}
\State $\NodeCut' \gets \NodeCut' \setminus \{ \NodeIdx \}$
\EndIf
\EndFor
\State $\mathcal{O} \gets \mathcal{O} \cup \{ (\DemandIdx, \NodeCut') \}$
\EndFor
\end{algorithmic}
\end{algorithm}

The branch-and-cut approach and the separation procedure described above follow those of \citet{ArslanEtAl2019}. 
However, in the deviation FRLP, identifying a violated constraint corresponds to finding a traversable path, whereas in the cyclic FRLP, it involves finding a traversable cycle.
The symmetric path assumption in the deviation FRLP (i.e., that drivers must use the same path for both outbound and inbound trips) considerably simplifies the search for a traversable path.
It can be shown that a path is repeatedly traversable if and only if a driver can leave the origin with a half-charged battery and reach the destination with at least 50\% of the battery remaining.
This condition reduces the search for a feasible path to a shortest path problem on a transformed network (see \citet{arslan2014impacts} for details).
In contrast, it is more involved to find a traversable cycle from the origin to the destination and back.
For instance, it is not known in advance how much energy can be assumed at the origin or how much must remain upon arrival at the destination. Hence, we propose a new procedure to find a traversable cycle.

In the cyclic FRLP, the existence of a traversable cycle in $\CycleSet_{\DemandIdx}$ can be tested using a labeling algorithm (see \citet{irnich2005shortest}).
For example, suppose $\CycleSet_{\DemandIdx}$ includes all cycles of length at most $\LengthLimit$ that start at the origin, visit the destination, and return to the origin.
The graph is first modified by adding a sink node $\SinkNode$ connected from the origin via a zero-length edge.
Each label has the form $(\LabelCharged, \LabelIsOutbound, \LabelDistFromStart, \LabelDistFromCharger, \LabelChargeAtStart)$, where 
\begin{itemize}
\item
$\LabelCharged$ is 1 if and only if we have visited at least one charging station and 0 otherwise, 
\item 
$\LabelIsOutbound$ is 1 if and only if we have visited the destination and 0 otherwise, 
\item 
$\LabelDistFromStart$ is the total distance traveled from the origin, 
\item 
$\LabelDistFromCharger$ is the distance since the last charging station (equal to $\LabelDistFromStart$ if $\LabelCharged = 0$),
\item 
$\LabelChargeAtStart$ is the amount of charge (in distance units) needed at the origin before completing the cycle (equal to $\infty$ if $\LabelCharged = 0$).
\end{itemize}
We initialize the origin with a label $(0, 0, 0, 0, \Nan)$ if there are no charging stations at the origin or otherwise $(1, 0, 0, 0, 0)$.
A label $(\LabelCharged, \LabelIsOutbound, \LabelDistFromStart, \LabelDistFromCharger, \LabelChargeAtStart)$ at node $\NodeIdx_{1}$ can be extended to node $\NodeIdx_{2} \not= \SinkNode$ if and only if $\LabelDistFromStart + \EdgeLength_{\NodeIdx_{1} \NodeIdx_{2}} \le \LengthLimit$ and $\LabelDistFromCharger + \EdgeLength_{\NodeIdx_{1} \NodeIdx_{2}} \le \TravelRange$ where $\EdgeLength_{\NodeIdx_{1} \NodeIdx_{2}}$ is the length of the edge from node $\NodeIdx_{1}$ to node $\NodeIdx_{2}$ and $\TravelRange$ is the vehicle's travel range.
Similarly, a label $(\LabelCharged, \LabelIsOutbound, \LabelDistFromStart, \LabelDistFromCharger, \LabelChargeAtStart)$ at the origin can be extended to the sink $\SinkNode$ if and only if $\LabelDistFromCharger + \LabelChargeAtStart \le \TravelRange$.
The label at node $\NodeIdx_{2}$ resulting from an extension is given by the resource extension function $\ResourceExtensionFunction$.
It is defined by $\ResourceExtensionFunction: (\LabelCharged, \LabelIsOutbound, \LabelDistFromStart, \LabelDistFromCharger, \LabelChargeAtStart) \mapsto (\LabelCharged', \LabelIsOutbound', \LabelDistFromStart', \LabelDistFromCharger', \LabelChargeAtStart')$, where
\begin{align*}
\LabelCharged' &= \begin{cases}
0, & \LabelCharged = 0 \ \MyAnd \ x_{\NodeIdx_{2}} = 0 \ (\text{i.e., there is no charging stations at $\NodeIdx_{2}$}), \\
1, & \text{otherwise,}
\end{cases}
\\
\LabelIsOutbound' &= \begin{cases}
0, & \LabelIsOutbound = 0 \ \MyAnd \ \NodeIdx_{2} \text{ is not the destination}, \\
1, & \text{otherwise,}
\end{cases}
\\
\LabelDistFromStart' &= \LabelDistFromStart + \EdgeLength_{\NodeIdx_{1} \NodeIdx_{2}},
\\
\LabelDistFromCharger' &= \begin{cases}
\LabelDistFromCharger + \EdgeLength_{\NodeIdx_{1} \NodeIdx_{2}}, & x_{\NodeIdx_{2}} = 0, \\
0, & \text{otherwise,}
\end{cases}
\\
\LabelChargeAtStart' &= \begin{cases}
\LabelDistFromStart + \EdgeLength_{\NodeIdx_{1} \NodeIdx_{2}}, & \LabelCharged = 0 \ \MyAnd \ x_{\NodeIdx_{2}} = 1, \\
\LabelChargeAtStart, & \text{otherwise.}
\end{cases}
\end{align*}

To determine the order in which labels are extended, we use a scoring rule. 
Let $\DistanceFunction(\NodeIdx_{1}, \NodeIdx_{2})$ denote the shortest distance from node $\NodeIdx_{1}$ to node $\NodeIdx_{2}$, and let $o$ and $d$ represent the origin and destination, respectively.
Given a label $(\LabelCharged, \LabelIsOutbound, \LabelDistFromStart, \LabelDistFromCharger, \LabelChargeAtStart)$ attached to a node $\NodeIdx$, its score is computed as $\DistanceFunction(\NodeIdx, d) + \DistanceFunction(d, o)$ if $\LabelIsOutbound = 0$, and as $\DistanceFunction(\NodeIdx, o)$ otherwise.
In other words, the score represents the length of the shortest cycle that could be formed by extending the label, ignoring travel range or distance limits. 
The label with the smallest score is selected for extension.

A traversable cycle in $\CycleSet_{\DemandIdx}$ exists if and only if the sink node $\SinkNode$ receives a label during the labeling process. When $\SinkNode$ is labeled, the condition $\LabelDistFromCharger + \LabelChargeAtStart \le \TravelRange$ ensures that the sum of the distance from the first charger after departure and the distance from the last charger before returning is at most the vehicle's travel range, thereby guaranteeing that the cycle can be repeated. Conversely, if a traversable cycle exists, there must be a corresponding sequence of labels that will be identified during the execution of the labeling algorithm.

Figure~\ref{fig_frlp_labelling} illustrates an execution of the labeling algorithm on a simple 3-node transportation network.
In this example, the sink $\SinkNode$ is labeled after five steps, confirming the existence of a cycle that satisfies both the travel range and total length constraints.

\begin{figure}[htbp]
\centering
\begin{tikzpicture}[
point/.style={circle,inner sep=0,minimum size=1.5mm,draw=black},
fullpoint/.style={circle,inner sep=0,minimum size=1.5mm,fill=black},
MyTitle/.style={font={\small\bfseries},anchor=west},
MyLabel/.style={rotate=45,font={\small},align=right,anchor=east},
MyLabelU/.style={rotate=45,font={\small},align=left,anchor=west},
hypograph/.style={ultra thick,draw=Rhodamine},
]








\begin{scope}[shift={(0,0)}]

\node[MyTitle] at (-1.6, 2.1) {\small Step 1};

\node[point,label={above:1}] (v0) at ( 0.0, 0.0) {};
\node[point,label={above:2}] (v1) at ( 3.0, 0.0) {};
\node[fullpoint,label={below:3}] (v2) at ( 1.5, 1.5) {};
\node[point,label={above:$s$}] (vs) at (-1.5, 0.0) {};

\draw 
    (v0) -- node[below]{\small $\TravelRange / 3$} (v1)
    (v0) -- node[above left]{\small $\TravelRange / 3$} (v2)
    (v1) -- node[above right]{\small $\TravelRange / 3$} (v2)
    (v0) -- node[above]{\small $0$} (vs)
;

\node[MyLabel] at ($ (v0) + (-0.0,-0.2) $) {\footnotesize \underline{$(0, 0, 0, 0, \infty)$}};

\end{scope}


\begin{scope}[shift={(7,0)}]

\node[MyTitle] at (-1.6, 2.1) {\small Step 2};

\node[point,label={above:1}] (v0) at ( 0.0, 0.0) {};
\node[point,label={above:2}] (v1) at ( 3.0, 0.0) {};
\node[fullpoint,label={below:3}] (v2) at ( 1.5, 1.5) {};
\node[point,label={above:$s$}] (vs) at (-1.5, 0.0) {};

\draw 
    (v0) -- node[below]{\small $\TravelRange / 3$} (v1)
    (v0) -- node[above left]{\small $\TravelRange / 3$} (v2)
    (v1) -- node[above right]{\small $\TravelRange / 3$} (v2)
    (v0) -- node[above]{\small $0$} (vs)
;

\node[MyLabel] at ($ (v1) + (-0.0,-0.2) $) {\footnotesize \underline{$(0, 1, \TravelRange / 3, \TravelRange / 3, \infty)$}};
\node[MyLabelU] at ($ (v2) + ( 0.1, 0.1) $) {\footnotesize $(1, 0, \TravelRange / 3, 0, \TravelRange / 3)$};

\end{scope}


\begin{scope}[shift={(0,-5.0)}]

\node[MyTitle] at (-1.6, 2.1) {\small Step 3};

\node[point,label={above:1}] (v0) at ( 0.0, 0.0) {};
\node[point,label={above:2}] (v1) at ( 3.0, 0.0) {};
\node[fullpoint,label={below:3}] (v2) at ( 1.5, 1.5) {};
\node[point,label={above:$s$}] (vs) at (-1.5, 0.0) {};

\draw 
    (v0) -- node[below]{\small $\TravelRange / 3$} (v1)
    (v0) -- node[above left]{\small $\TravelRange / 3$} (v2)
    (v1) -- node[above right]{\small $\TravelRange / 3$} (v2)
    (v0) -- node[above]{\small $0$} (vs)
;

\node[MyLabel] at ($ (v0) + (-0.0,-0.2) $) {\footnotesize \underline{$(0, 1, 2 \TravelRange / 3, 2 \TravelRange / 3, \infty)$}};
\node[MyLabelU] at ($ (v2) + ( 0.1, 0.1) $) {\footnotesize $(1, 0, \TravelRange / 3, 0, \TravelRange / 3)$ \\ \footnotesize $(1, 1, 2 \TravelRange / 3, 0, 2 \TravelRange / 3)$};

\end{scope}


\begin{scope}[shift={(7,-5.0)}]

\node[MyTitle] at (-1.6, 2.1) {\small Step 4};

\node[point,label={above:1}] (v0) at ( 0.0, 0.0) {};
\node[point,label={above:2}] (v1) at ( 3.0, 0.0) {};
\node[fullpoint,label={below:3}] (v2) at ( 1.5, 1.5) {};
\node[point,label={above:$s$}] (vs) at (-1.5, 0.0) {};

\draw 
    (v0) -- node[below]{\small $\TravelRange / 3$} (v1)
    (v0) -- node[above left]{\small $\TravelRange / 3$} (v2)
    (v1) -- node[above right]{\small $\TravelRange / 3$} (v2)
    (v0) -- node[above]{\small $0$} (vs)
;

\node[MyLabel] at ($ (v0) + (-0.0,-0.2) $) {\footnotesize};
\node[MyLabel] at ($ (v1) + (-0.0,-0.2) $) {\footnotesize $(0, 1, \TravelRange, \TravelRange, \infty)$};
\node[MyLabelU] at ($ (v2) + ( 0.1, 0.1) $) {\footnotesize $(1, 0, \TravelRange / 3, 0, \TravelRange / 3)$ \\ \footnotesize \underline{$(1, 1, 2 \TravelRange / 3, 0, 2 \TravelRange / 3)$} \\ \footnotesize $(1, 1, \TravelRange, 0, \TravelRange)$};

\end{scope}


\begin{scope}[shift={(0,-10.0)}]

\node[MyTitle] at (-1.6, 2.1) {\small Step 5};

\node[point,label={above:1}] (v0) at ( 0.0, 0.0) {};
\node[point,label={above:2}] (v1) at ( 3.0, 0.0) {};
\node[fullpoint,label={below:3}] (v2) at ( 1.5, 1.5) {};
\node[point,label={above:$s$}] (vs) at (-1.5, 0.0) {};

\draw 
    (v0) -- node[below]{\small $\TravelRange / 3$} (v1)
    (v0) -- node[above left]{\small $\TravelRange / 3$} (v2)
    (v1) -- node[above right]{\small $\TravelRange / 3$} (v2)
    (v0) -- node[above]{\small $0$} (vs)
;

\node[MyLabel] at ($ (v0) + (-0.0,-0.2) $) {\footnotesize \underline{$(1, 1, \TravelRange, \TravelRange / 3, 2 \TravelRange / 3)$}};
\node[MyLabel] at ($ (v1) + (-0.0,-0.2) $) {\footnotesize $(0, 1, \TravelRange, \TravelRange, \infty)$ \\ \footnotesize $(1, 1, \TravelRange, \TravelRange / 3, 2 \TravelRange / 3)$};
\node[MyLabelU] at ($ (v2) + ( 0.1, 0.1) $) {\footnotesize $(1, 0, \TravelRange / 3, 0, \TravelRange / 3)$ \\ \footnotesize $(1, 1, \TravelRange, 0, \TravelRange)$};

\end{scope}


\begin{scope}[shift={(7,-10.0)}]

\node[MyTitle] at (-1.6, 2.1) {\small Step 6};

\node[point,label={above:1}] (v0) at ( 0.0, 0.0) {};
\node[point,label={above:2}] (v1) at ( 3.0, 0.0) {};
\node[fullpoint,label={below:3}] (v2) at ( 1.5, 1.5) {};
\node[point,label={above:$s$}] (vs) at (-1.5, 0.0) {};

\draw 
    (v0) -- node[below]{\small $\TravelRange / 3$} (v1)
    (v0) -- node[above left]{\small $\TravelRange / 3$} (v2)
    (v1) -- node[above right]{\small $\TravelRange / 3$} (v2)
    (v0) -- node[above]{\small $0$} (vs)
;

\node[MyLabel] at ($ (v0) + ( 0.0,-0.2) $) {};
\node[MyLabel] at ($ (v1) + ( 0.0,-0.2) $) {\footnotesize $(0, 1, \TravelRange, \TravelRange, \infty)$ \\ \footnotesize $(1, 1, \TravelRange, \TravelRange / 3, 2 \TravelRange / 3)$};
\node[MyLabelU] at ($ (v2) + ( 0.1, 0.1) $) {\footnotesize $(1, 0, \TravelRange / 3, 0, \TravelRange / 3)$};
\node[MyLabel] at ($ (vs) + ( 0.0,-0.2) $) {\footnotesize $(1, 1, \TravelRange, \TravelRange / 3, 2 \TravelRange / 3)$};

\end{scope}

\end{tikzpicture}
\vspace{2em}
\caption{An illustrative example of a 3-node transportation network and a corresponding execution sequence of the labeling algorithm. Node 1 is the origin, and node 2 is the destination. Suppose both the travel range and the maximum travel distance are $\TravelRange$ (corresponding to a 50\% deviation tolerance). Open nodes indicate locations without charging stations, whereas closed nodes indicate locations with a charging station. Labels are shown at their respective nodes, and underlined labels denote those selected for extension in subsequent iterations.}
\label{fig_frlp_labelling}
\end{figure}

\subsection{Numerical Experiments}\label{subsec_numerical_experiments}

In this section, numerical experiments are presented to demonstrate the effect of the flexibility introduced by the cyclic FRLP.
In Section~\ref{subsec_frlp_for_maximizing_served_demand}, we run experiments to see how much demand the solution of the original and cyclic FRLP can cover.
In Section~\ref{subsec_frlp_for_serving_all_demands}, we use the original and cyclic FRLP to determine the minimum number of charging stations required to serve all demands.

\subsubsection{FRLP for Maximizing Served Demand}\label{subsec_frlp_for_maximizing_served_demand}

In this section, we use the original and cyclic FRLP to find placements of charging stations to serve as much demand as possible.
We assume that drivers are willing to use different paths for their outbound and return trips and deviate from their shortest cycle up to the factor of $\DeviationTolerance$, which is a parameter varied in the experiments.
In the first setup, we run the original FRLP, which assumes drivers must use the same path for their outbound and inbound trips, and determine the placement of 5 charging stations.
We then reevaluate the amount of served demand under the relaxed assumption that drivers are willing to use different paths on their outbound and inbound trips.
The reevaluated served demand must be greater than or equal to the original FRLP’s objective value.
In the second setup, we run the cyclic FRLP and find the allocation of 5 charging stations that maximizes the amount of served demand, correctly assuming drivers are willing to use different paths in the two directions.
The original FRLP is solved with the method of \citet{ArslanEtAl2019} with formulation~\eqref{eq_agg_extended_formulation}, while the cyclic FRLP is solved with the method described in Section~\ref{subsec_solution_method}.
We use two transportation networks introduced by~\citet{simchi1988heuristic} (see also~\citet{kuby2007location}) and by~\citet{arslan2014impacts} (which is also used by \citet{ArslanEtAl2019} and \citet{GopfertAndBock2019}).
See Table~\ref{tab_instance_stats} for the statistics.
All methods are implemented in C++ with CPLEX 22.1 (executed on a single thread) and executed on a machine with 2 ``Intel(R) Xeon(R) Gold 6226 CPU'' (12 cores each) and 384GB of RAM.

\begin{table}[htbp]
    \centering
    \caption{Characteristics of the transportation networks used in the experiments.}
    \label{tab_instance_stats}
    \begin{tabular}{ccrrr}
\toprule
Name & Reference & Number of nodes & Number of edges & Number of OD pairs \\
\midrule
\textsc{Network25} & \cite{simchi1988heuristic} & 25 & 42 & 300 \\
\textsc{California} & \cite{arslan2014impacts} & 339 & 1234 & 1167 \\
\bottomrule
    \end{tabular}
\end{table}

Table~\ref{tab_max_cover_result} shows the results.
For the original FRLP, it reports the objective value based on the formulation~\eqref{eq_agg_extended_formulation}, and the actual demand served by the solution output from the original FRLP, reevaluated using the assumption of the cyclic FRLP.
For the cyclic FRLP, the objective value equals the amount of served demand, since it correctly calculates the objective value under our assumption.

On the smaller network \textsc{Network25}, the difference in served demand between the two methods is small, except when the deviation tolerance is $\DeviationTolerance = 1.5$.
We observe larger discrepancies on the larger network \textsc{California}.
However, the time required by the cyclic FRLP tends to be larger.
For both methods, the solution time tends to increase as the deviation tolerance $\DeviationTolerance$ gets larger.
We note that the increase is more rapid for the cyclic FRLP compared to the original FRLP.
In fact, the majority of the computational time is spent on the separation procedure, indicating that the labeling algorithm takes longer to execute as the deviation tolerance increases. When solving the original FRLP, most of the computational time is also devoted to separation. Since, as discussed in Section~\ref{subsec_formulations_of_the_cyclic_frlp}, in the original FRLP the separation reduces to a shortest path problem, whereas for the cyclic FRLP the same procedure is not applicable, the higher solving times for the cyclic FRLP are to be expected.
More detailed results are provided in Appendix~\ref{sec_detailed_results}.

\begin{table}[htbp]
    \centering
    \caption{Comparison of original and cyclic FRLP models for maximizing served demand.
Columns indicate deviation tolerance ($\alpha$), original FRLP objective value (Objective), served demand assuming drivers may use different paths outbound and inbound (Served demand), and computation times in seconds (Time).}
    \label{tab_max_cover_result}
    \begin{tabular}{ccrrrrr}
\toprule
&& \multicolumn{3}{c}{Original FRLP} & \multicolumn{2}{c}{Cyclic FRLP} \\
\cmidrule(lr){3-5}\cmidrule(lr){6-7}
& $\DeviationTolerance$ & Objective & Served demand & Time (s) & Served demand & Time (s) \\
\midrule
\multirow[m]{6}{*}{\textsc{Network25}} & 1.0 & 56.16 & 56.16 & 0.10 & 56.16 & 0.04 \\
 & 1.1 & 58.38 & 58.38 & 0.03 & 58.38 & 0.05 \\
 & 1.2 & 59.20 & 59.20 & 0.03 & 59.20 & 0.09 \\
 & 1.3 & 60.12 & 60.77 & 0.04 & 60.77 & 0.23 \\
 & 1.4 & 61.70 & 63.14 & 0.05 & 63.17 & 0.27 \\
 & 1.5 & 63.17 & 65.41 & 0.06 & 68.21 & 0.35 \\
\cmidrule(lr){1-7}
\multirow[m]{6}{*}{\textsc{California}} & 1.0 & 79.94 & 79.94 & 3.32 & 79.94 & 0.82 \\
 & 1.1 & 85.91 & 86.71 & 19.66 & 87.89 & 160.43 \\
 & 1.2 & 89.08 & 91.80 & 24.31 & 92.42 & 356.69 \\
 & 1.3 & 91.61 & 94.37 & 11.21 & 95.31 & 467.08 \\
 & 1.4 & 92.95 & 95.85 & 29.58 & 97.04 & 695.10 \\
 & 1.5 & 94.51 & 96.70 & 29.18 & 97.96 & 865.44 \\
\bottomrule
    \end{tabular}
\end{table}

\subsubsection{FRLP for Serving All Demands}\label{subsec_frlp_for_serving_all_demands}

Although we have focused on the FRLP for maximizing the amount of served demand, formulation~\eqref{eq_agg_extended_formulation} can be modified to solve the FRLP with other objectives.
For example, as discussed by \citet{ArslanEtAl2019}, the FRLP that finds the minimum number of charging stations to serve all demands can be formulated as
\begin{alignat}{2}
\min_{\MasterVar \in \{0, 1\}^{|\NodeSet|}} \ & \sum_{\NodeIdx \in \NodeSet} x_{\NodeIdx}
\label{eq_frlp_all_cover_agg_extended_formulation}
\\
\text{s.t.} \ & \sum_{\NodeIdx \in \NodeCut} \MasterVar_{\NodeIdx} \ge 1, && \qquad \forall \DemandIdx \in \DemandSet, \NodeCut \in \AggNodeCutSet{\DemandIdx}. 
\notag
\end{alignat}
The adaptation of the solution method to solve this variant is straightforward, and hence omitted.

In the next experiment, we run the original FRLP and the cyclic FRLP to find the minimum number of charging stations required to serve all demands by solving~\eqref{eq_frlp_all_cover_agg_extended_formulation}.
Table~\ref{tab_all_cover_result} shows the results.
Similar to the FRLP for maximizing served demands, it takes more time to solve the cyclic FRLP than the original FRLP.
However, the cyclic FRLP tends to lead to solutions using fewer charging stations than the original FRLP, especially when the deviation tolerance $\DeviationTolerance$ is large.
For both the original FRLP and cyclic FRLP, the solution time grows as the deviation tolerance $\DeviationTolerance$ increases.
Therefore, it is important to use the correct variant of the FRLP depending on driver behavior.
Additional results are again provided in Appendix~\ref{sec_detailed_results}.

\begin{table}[htbp]
    \centering
    \caption{Comparison of original and cyclic FRLP models for serving all demands with the minimum number of charging stations.}
    \label{tab_all_cover_result}
    \begin{tabular}{ccrrrr}
\toprule
&& \multicolumn{2}{c}{Original FRLP} & \multicolumn{2}{c}{Cyclic FRLP} \\
\cmidrule(lr){3-4}\cmidrule(lr){5-6}
& $\DeviationTolerance$ & Num.\ of charging stations & Time (s) & Num.\ of charging stations & Time (s) \\
\midrule
\multirow[m]{6}{*}{\textsc{Network25}} & 1.0 & 17 & 0.02 & 17 & 0.02 \\
 & 1.1 & 17 & 0.02 & 17 & 0.04 \\
 & 1.2 & 17 & 0.02 & 17 & 0.08 \\
 & 1.3 & 17 & 0.02 & 16 & 0.12 \\
 & 1.4 & 17 & 0.03 & 16 & 0.16 \\
 & 1.5 & 16 & 0.03 & 14 & 0.19 \\
\cmidrule(lr){1-6}
\multirow[m]{6}{*}{\textsc{California}} & 1.0 & 24 & 6.14 & 24 & 1.40 \\
 & 1.1 & 22 & 10.66 & 19 & 43.12 \\
 & 1.2 & 19 & 14.57 & 16 & 106.23 \\
 & 1.3 & 18 & 17.55 & 12 & 160.47 \\
 & 1.4 & 16 & 18.86 & 9 & 212.63 \\
 & 1.5 & 14 & 19.79 & 9 & 279.44 \\
\bottomrule
    \end{tabular}
\end{table}

\section{Conclusions}\label{sec_conclusions}

We studied the FRLP, an optimization problem for determining the optimal placement of refueling stations for vehicles with limited travel ranges, such as EV charging stations.
Specifically, we examined two formulations from the literature, one developed independently by \citet{ArslanEtAl2019} and \citet{GopfertAndBock2019}, and another one proposed by \citet{GopfertAndBock2019}.
The former formulation is used in the state-of-the-art algorithm, which is based on a branch-and-cut approach.
We showed that this formulation is tighter than the latter and that the relative difference in bounds obtained from their LP relaxation can be arbitrarily large.
This tightness may partially explain the superior performance of the current state-of-the-art solution method.

We also addressed a common routing assumption used in the literature of the FRLP, which requires drivers to use the same path for their outbound and inbound trips.
To relax this, we introduced a variant called the cyclic FRLP, where drivers may take different paths on their outbound and return trips.
We demonstrated how existing formulations for the original FRLP can be naturally extended to model this variant and presented its solution method.
Furthermore, we presented a labeling algorithm for finding a traversable cycle, enabling the use of a branch-and-cut approach to solve the cyclic FRLP.
Through a small example and numerical experiments on the Californian network from \citet{arslan2014impacts}, we highlighted the significant impact of correctly modeling driver behavior on solution quality.

The cyclic FRLP is more flexible than the original FRLP, which requires drivers to use the same path for both outbound and return trips.
It relaxes this assumption by allowing different paths in each direction, but still assumes that the same cycle is used for every round trip.
A natural extension is to allow drivers to repeat an $n$-sequence of cycles with some predetermined number $n$, which could model medium-distance travel where refueling or recharging is not needed after every trip.
The solution method, including the separation procedure based on the labeling algorithm, can be readily adapted to handle this extension.
However, evaluating its practical relevance is left for future work.

\appendix

\section{Detailed Results}\label{sec_detailed_results}

This section contains detailed results.
Table~\ref{tab_max_cover_result_detailed} corresponds to the experiment in Section~\ref{subsec_frlp_for_maximizing_served_demand} (the FRLP for maximizing the served demand).
Table~\ref{tab_all_covering_result_detailed} is the result of the experiment in Section~\ref{subsec_frlp_for_serving_all_demands} (the FRLP for covering all demands).

\begin{table}[htbp]
\centering
\caption{Comparison of original and cyclic FRLP models for maximizing served demand.
Columns indicate deviation tolerance ($\alpha$), total computation times in seconds (Time), computation times spent in separation (Separation time), the number of branch-and-bound nodes (\# B\&B nodes), and the number of violated constraints added (\# cuts).}
    \label{tab_max_cover_result_detailed}
\begin{tabular}{lllrrrr}
\toprule
Instance & Routing & $\DeviationTolerance$ & Time (s) & Separation time (s) & Num.\ of B\&B nodes & Num.\ of cuts \\
\midrule
\multirow[m]{12}{*}{\textsc{network25}} & \multirow[m]{6}{*}{original} & 1.0 & 0.10 & 0.01 & 7 & 721 \\
 &  & 1.1 & 0.03 & 0.01 & 3 & 586 \\
 &  & 1.2 & 0.03 & 0.02 & 8 & 636 \\
 &  & 1.3 & 0.04 & 0.02 & 7 & 673 \\
 &  & 1.4 & 0.05 & 0.03 & 5 & 727 \\
 &  & 1.5 & 0.06 & 0.04 & 3 & 834 \\
\cmidrule(lr){2-7}
 & \multirow[m]{6}{*}{cyclic} & 1.0 & 0.04 & 0.01 & 15 & 761 \\
 &  & 1.1 & 0.05 & 0.03 & 11 & 666 \\
 &  & 1.2 & 0.09 & 0.07 & 5 & 722 \\
 &  & 1.3 & 0.23 & 0.21 & 17 & 955 \\
 &  & 1.4 & 0.27 & 0.24 & 10 & 848 \\
 &  & 1.5 & 0.35 & 0.33 & 12 & 858 \\
\cmidrule(lr){1-7}
\multirow[m]{12}{*}{\textsc{California}} & \multirow[m]{6}{*}{original} & 1.0 & 3.32 & 3.28 & 0 & 1448 \\
 &  & 1.1 & 19.66 & 19.59 & 7 & 2178 \\
 &  & 1.2 & 24.31 & 24.25 & 3 & 2116 \\
 &  & 1.3 & 11.21 & 11.18 & 0 & 1390 \\
 &  & 1.4 & 29.58 & 29.51 & 5 & 1906 \\
 &  & 1.5 & 29.18 & 29.13 & 2 & 2026 \\
\cmidrule(lr){2-7}
 & \multirow[m]{6}{*}{cyclic} & 1.0 & 0.82 & 0.77 & 0 & 2134 \\
 &  & 1.1 & 160.43 & 160.26 & 32 & 3185 \\
 &  & 1.2 & 356.69 & 356.44 & 45 & 2710 \\
 &  & 1.3 & 467.08 & 466.91 & 39 & 3025 \\
 &  & 1.4 & 695.10 & 694.90 & 130 & 2918 \\
 &  & 1.5 & 865.44 & 865.33 & 0 & 2592 \\
\bottomrule
\end{tabular}
\end{table}

\begin{table}[htbp]
\centering
\caption{Comparison of original and cyclic FRLP models for serving all demands with the minimum number of charging stations.}
    \label{tab_all_covering_result_detailed}
\begin{tabular}{lllrrrr}
\toprule
Instance & Routing & $\DeviationTolerance$ & Time (s) & Callback time (s) & Num.\ of B\&B nodes & Num.\ of cuts \\
\midrule
\multirow[m]{12}{*}{\textsc{network25}} & \multirow[m]{6}{*}{original} & 1.0 & 0.02 & 0.01 & 3 & 945 \\
 &  & 1.1 & 0.02 & 0.01 & 3 & 934 \\
 &  & 1.2 & 0.02 & 0.02 & 2 & 929 \\
 &  & 1.3 & 0.02 & 0.02 & 2 & 926 \\
 &  & 1.4 & 0.03 & 0.02 & 4 & 928 \\
 &  & 1.5 & 0.03 & 0.02 & 3 & 924 \\
\cmidrule(lr){2-7}
 & \multirow[m]{6}{*}{cyclic} & 1.0 & 0.02 & 0.02 & 0 & 901 \\
 &  & 1.1 & 0.04 & 0.04 & 0 & 902 \\
 &  & 1.2 & 0.08 & 0.08 & 3 & 901 \\
 &  & 1.3 & 0.12 & 0.12 & 3 & 907 \\
 &  & 1.4 & 0.16 & 0.16 & 3 & 906 \\
 &  & 1.5 & 0.19 & 0.18 & 2 & 900 \\
\cmidrule(lr){1-7}
\multirow[m]{12}{*}{\textsc{California}} & \multirow[m]{6}{*}{original} & 1.0 & 6.14 & 6.12 & 24 & 3712 \\
 &  & 1.1 & 10.66 & 10.64 & 14 & 3535 \\
 &  & 1.2 & 14.57 & 14.56 & 16 & 3501 \\
 &  & 1.3 & 17.55 & 17.54 & 8 & 3554 \\
 &  & 1.4 & 18.86 & 18.84 & 13 & 3513 \\
 &  & 1.5 & 19.79 & 19.77 & 1 & 3517 \\
\cmidrule(lr){2-7}
 & \multirow[m]{6}{*}{cyclic} & 1.0 & 1.40 & 1.38 & 73 & 3903 \\
 &  & 1.1 & 43.12 & 43.10 & 7 & 3509 \\
 &  & 1.2 & 106.23 & 106.20 & 28 & 3517 \\
 &  & 1.3 & 160.47 & 160.46 & 0 & 3554 \\
 &  & 1.4 & 212.63 & 212.62 & 0 & 3534 \\
 &  & 1.5 & 279.44 & 279.41 & 6 & 3576 \\
\bottomrule
\end{tabular}
\end{table}

\section*{Data Availability}

Data available on request from the authors.

\section*{Acknowledgements}

This research was supported by Hydro-Québec, NSERC Collaborative Research and Development Grant CRDPJ 536757 - 19.

The authors are grateful to Professor Okan Arslan, who kindly shared the Californian network used in our numerical experiments.

\section*{Declarations}
\textbf{Conflict of interest} The authors declare that they have no conflict of interest.

\bibliography{references}

\end{document}